\theoremstyle{theorem}
\newtheorem{lemma}{Lemma}[section]
\newtheorem{theorem}[lemma]{Theorem}
\newtheorem{corollary}[lemma]{Corollary}
\newtheorem{proposition}[lemma]{Proposition}
\newtheorem{question}{Question}
\theoremstyle{definition}
\newtheorem{definition}[lemma]{Definition}
\theoremstyle{remark}
\newtheorem{remark}[lemma]{Remark}
\newcommand{\link}{\mathrm{link}}
\newcommand{\R}{\mathbb{R}}
\newcommand{\C}{\mathbb{C}}
\newcommand{\Z}{\mathbb{Z}}
\newcommand{\D}{\mathbb{D}}
\newcommand{\N}{\mathbb{N}}
\newcommand{\wind}{\operatorname{wind}}
\newcommand{\jtil}{\widetilde{J}}
\newcommand{\util}{\widetilde{u}}
\newcommand{\vtil}{\widetilde{v}}
\newcommand{\wtil}{\widetilde{w}}
\newcommand{\CZ}{{\rm CZ}}
\newcommand{\jbar}{\bar{J}}
\renewcommand{\P}{\mathcal{P}}
\newcommand{\Mfast}{\mathcal{M}^{\rm fast}}
\newcommand{\ev}{{\rm ev}}
\newcommand{\Pscr}{\mathscr{P}}
\begin{document}

\title[]{Global surfaces of section with positive genus for 
dynamically convex 
Reeb flows}
\author[]{Umberto L. Hryniewicz}
\author[]{Pedro A. S. Salom\~ao}
\author[]{Richard Siefring}
\date{\today}

\address{Umberto L. Hryniewicz -- RWTH Aachen, Jakobstrasse 2, Aachen 52064, Germany}
\email{hryniewicz@mathga.rwth-aachen.de}

\address{Pedro A. S. Salom\~ao -- Instituto de Matem\'atica e Estat\'istica, Departamento de Matem\'atica, Universidade de S\~ao Paulo, Rua do Mat\~ao, 1010, Cidade Universit\'aria, S\~ao Paulo SP, Brazil 05508-090 \newline\newline NYU-ECNU Institute of Mathematical Sciences at NYU Shanghai, 3663 Zhongshan Road North, Shanghai, 200062, China}
\email{psalomao@ime.usp.br; pas383@nyu.edu}

\address{Richard Siefring -- Fakult\"at f\"ur Mathematik \\ Ruhr-Universit\"at Bochum \\ 44780 Bochum \\ Germany}
\email{richard.siefring@ruhr-uni-bochum.de}

\begin{abstract}
%We introduce a symplectic capacity defined in terms of symplectic areas of global surfaces of section in order to study uniqueness of capacities on convex bodies in dimension four. 
We establish some new existence results for global surfaces of section of dynamically convex Reeb flows on the three-sphere. These sections often have genus, and are the result of a combination of pseudo-holomorphic methods with some elementary ergodic methods.
\end{abstract}

\maketitle

%\tableofcontents

%\pagebreak

\begin{center}
{\it Dedicated to Prof. Claude Viterbo on the occasion of his 60th birthday.}
\end{center}

%\bigskip
%\bigskip

\section{Introduction and main results}

Let $(z_0,z_1)$ be complex coordinates in $\C^2$, $S^3 = \{|z_0|^2+|z_1|^2=1\}$, and $\alpha_0$ be the standard Liouville form $(-i/4) \ \Sigma_{j} \ \bar z_j dz_j - z_jd\bar z_j$. The standard symplectic form on $\C^2$ is $\omega_0 = d\alpha_0$. The fibres of the Hopf fibration are the periodic Reeb orbits of the contact form $\lambda_0$ on $S^3$ induced by $\alpha_0$. Let us call the Reeb flow of $\lambda_0$ the \textit{Hopf flow}. The contact structure $\xi_0 = \ker\lambda_0$ is called {\it standard}.

The contact form $\lambda_0$ is the first example of a \textit{dynamically convex} contact form. In $S^3$ a contact form $\lambda$ is said to be dynamically convex if all periodic orbits have Conley-Zehnder index $\geq 3$ when computed in a global $d\lambda$-symplectic frame of 
%the contact structure 
$\ker\lambda$. This notion was introduced by Hofer, Wysocki and Zehnder (HWZ) in~\cite{convex}.

One can show quite explicitly that all finite collections of periodic orbits of the Hopf flow span some global surface of section, see~\cite{AGZ}. It is natural to ask if this property remains true for all dynamically convex Reeb flows on $S^3$, in particular for all strictly convex energy levels in $(\C^2,\omega_0)$ (\cite[Theorem~3.4]{convex}). This might be too ambitious to try to prove, and one may be led to naively think that it is easy to find a counter-example. There is, however, another natural way to generalise this property of the Hopf flow to the Reeb flows of all dynamically convex contact forms on $S^3$. Since the Hopf fibres are unknotted with self-linking number $-1$, one might ask if all finite collections of periodic Reeb orbits of this kind span some global surface of section. This is our first result.

\begin{theorem}\label{main3}
Let $L$ be any link formed by periodic Reeb orbits of a dynamically convex contact form on $S^3$ whose components are unknotted with self-linking number~$-1$. Then $L$ bounds a global surface of section for the Reeb flow.
\end{theorem}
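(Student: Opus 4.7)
The plan is to combine the disk-like surface-of-section theorem of Hofer, Wysocki and Zehnder for a single orbit with a topological/dynamical surgery that absorbs the remaining components of $L$ into the boundary.

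First, I would fix a component $\gamma_1 \subset L$. By the main theorem of \cite{convex} and its subsequent refinements (notably the existence of a ``fast'' disk-like section associated to an unknotted periodic orbit with $\mathrm{sl}=-1$ and $\CZ\geq 3$), the hypotheses on $\gamma_1$ yield a disk-like global surface of section $D_1 \subset S^3$ with $\partial D_1 = \gamma_1$ whose interior is transverse to the Reeb flow. For each $j\geq 2$, the orbit $\gamma_j$ is disjoint from $\gamma_1$ and meets $\mathrm{int}(D_1)$ transversely and positively, so the geometric intersection number equals the linking number $k_j := \operatorname{lk}(\gamma_1,\gamma_j)\geq 1$ (strict positivity because otherwise $\gamma_j$ would be a periodic orbit missing the section).

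The next step is a Fried-type surgery to convert the interior intersections with $\gamma_2,\ldots,\gamma_n$ into boundary components. I would choose pairwise disjoint tubular neighborhoods $N_j$ of the $\gamma_j$ ($j\geq 2$), each disjoint from $\gamma_1$, and inside $N_j$ I would replace $D_1 \cap N_j$ (a disjoint union of $k_j$ small disks transverse to $\gamma_j$) by a connected oriented surface $S_j\subset N_j$ whose inner boundary is $\gamma_j$ and whose outer boundary on $\partial N_j$ matches the trace of $D_1 \setminus \bigsqcup_j N_j$ up to isotopy in $\partial N_j$. The model is a ``helicoidal'' Seifert-type surface that limits to $\gamma_j$ radially while wrapping $k_j$ times longitudinally, typically of positive genus when $k_j>1$; this is the geometric reason that the theorem produces sections of positive genus. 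The resulting surface $\Sigma$ then has $\partial\Sigma = L$, is transverse to the Reeb flow on its interior, and retains the return property of $D_1$: any orbit disjoint from $L$ that used to intersect $D_1$ in forward and backward time still does so for $\Sigma$, because the surgery is supported in an arbitrarily small neighborhood of $L$.

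The main obstacle will be the local construction of $S_j$: producing a helicoidal cap transverse to the Reeb flow throughout $N_j$ and whose outer boundary isotopes correctly (as an oriented $1$-cycle on $\partial N_j$) to the meridional trace of $D_1 \setminus N_j$. Here the Conley-Zehnder bound $\CZ(\gamma_j)\geq 3$ plays a crucial role by providing enough rotation of the linearized Reeb flow about $\gamma_j$ to permit a cross-section of the required slope. A careful bookkeeping of orientations (Reeb direction versus induced boundary orientation on $\partial\Sigma$) across the surgery, and a verification that the surgered surface is still embedded with the correct local model near $L$, will also be required.
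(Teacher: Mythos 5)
Your first step is fine and matches the paper's starting point: by the result of \cite{hryn_jsg} (refining \cite{convex}), each unknotted component with self-linking number $-1$ bounds a disk-like global surface of section, and each remaining orbit meets such a disk positively, with geometric intersection number equal to the linking number. The fatal problem is the surgery step. You propose to build, inside a tubular neighbourhood $N_j$ of $\gamma_j$, a compact oriented surface $S_j$ whose inner boundary is $\gamma_j$ and whose outer boundary on $\partial N_j$ is isotopic to the trace of $D_1$, i.e.\ to $k_j$ meridians. No such surface exists, for homological reasons: in $H_1(N_j)\cong\Z$, generated by the core, the class of $\partial S_j$ must vanish, but $[\gamma_j]+k_j[m]=[\gamma_j]\neq 0$ since meridians are null-homologous in $N_j$ while the core generates. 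Equivalently, for any Seifert surface $\Sigma$ of $L$ the curve $\Sigma\cap\partial N_j$ must carry one unit of longitude, whereas the only material your construction feeds into $N_j$ (the trace of $D_1$) is purely meridional. The same obstruction shows up cohomologically: the class dual to $D_1$ evaluates to $0$ on a small meridian of $\gamma_j$ for $j\geq 2$, while the class dual to a Seifert surface of $L$ must evaluate to $1$ there. So no surgery supported near $\gamma_2,\dots,\gamma_n$ can convert $D_1$ alone into a surface with boundary $L$; the ``helicoidal cap'' you describe cannot close up.

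The repair is to use \emph{all} the disks $D_1,\dots,D_N$, one for each component, not just one. This is exactly what the paper does, in two possible ways. The cut-and-paste route (Remark~\ref{rmk_CDR}, due to Colin--Dehornoy--Rechtman) takes the union $\bigcup_i D_i$: each $\gamma_j$ is already a genuine (longitudinal) boundary component via $\partial D_j=\gamma_j$, the transverse intersections of the other disks with $\gamma_j$ are absorbed by spiralling those disks into $\gamma_j$ (adding meridional wrapping on top of the existing longitude, which is now homologically consistent), and the $D_i\cap D_j$ intersection curves are resolved by oriented sum, preserving positive transversality to the flow. The proof actually written in the paper avoids explicit surgery altogether: it forms the class $y=\sum_i\iota_i^*y_i$ dual to the sum of all the disks and verifies the two hypotheses of the Schwartzman--Fried--Sullivan criterion (Theorem~\ref{thm_SFS}), namely $\rho^y(\gamma_i)>0$ from $\CZ\geq 3$ together with ${\rm sl}=-1$, and $\mu\cdot y>0$ for every invariant measure from the uniform bound on the return time of each disk. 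Your closing observation that $\CZ(\gamma_j)\geq3$ supplies the rotation needed near $\gamma_j$ is the right instinct --- it is precisely condition (iii-a) of Theorem~\ref{thm_SFS} --- but it cannot rescue a surface that does not exist for topological reasons.
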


There are no hidden genericity assumptions on the contact form. The genus of these sections will typically explode with the number of boundary orbits.  Moreover, there is no need to specify the contact structure since only the standard one can be defined by a dynamically convex contact form on $S^3$, see~\cite{char2}.

%A proof relying exclusively on pseudo-holomorphic curve methods would have to handle the not easy task of getting compactness for curves with genus. 

A proof relying exclusively on pseudo-holomorphic curves would be complicated by the fact, originally observed in~\cite{hofer_survey}, that transversality fails for curves with genus which are everywhere transverse to the flow.  The solution proposed to this problem in~\cite{hofer_survey} is to consider a perturbation of the holomorphic curve equation which corrects the transversality problem, but seriously complicates the compactness theory (see~\cite{abbas,doicu_fuchs_1,doicu_fuchs_2}). However, dealing with genus is unavoidable since the links covered by Theorem~\ref{main3} typically have positive Seifert genus. A proof without pseudo-holomorphic curves seems out of reach since dynamical convexity is an assumption only on the periodic orbits, and holomorphic curve techniques have proven to be one of the very few -- if not the only -- effective methods for finding surfaces of section under assumptions of this kind.

Let us outline the argument. The main step is the result from~\cite{hryn_jsg} stating that every component of a link $L$ as in Theorem~\ref{main3} bounds a disk-like global surface of section. At this point ergodic methods come to aid via asymptotic cycles. We use the statement from~\cite{SFS} refining a celebrated result due to Fried~\cite{fried}. The disks can be used to check the hypotheses of~\cite[Theorem~1.3]{SFS}. Each disk has uniformly bounded return time, hence all invariant measures in $S^3\setminus L$ positively hit the sum of the cohomology classes dual to each disk. Finally, positivity of rotation numbers follows from dynamical convexity.

\begin{remark}\label{rmk_CDR}
It was explained to us by Colin, Dehornoy and Rechtman that the input from pseudo-holomorphic curves from~\cite{hryn_jsg} can be used in a more elementary way, avoiding asymptotic cycles. One can take the union of the disks and ``resolve intersections'' to construct the desired sections. This idea is extensively used in~\cite{CDR}.
\end{remark}

In~\cite{ghys} Ghys introduced the notion of \textit{right (left) handed} vector field on a homology three-sphere, and explained that all finite collections of periodic orbits of such a vector field span a global surface of section. The Hopf flow is the simplest example of a right handed vector field. Examples of left handed geodesic flows on negatively curved two-dimensional orbifolds are presented by Dehornoy~\cite{dehornoy}. Right handedness provides deep insight on the dynamics. For instance, it follows that every finite collection of periodic orbits is a fibered link, hence there are strong knot theoretical restrictions. Moreover, as soon as such a collection is ``misplaced'' then Nielsen-Thurston theory might be used to obtain entropy via the study of the isotopy class of the return map. %We believe it to be an interesting task that of understanding how large the class right handed Reeb flows is. 

\begin{question}\label{ques_right_handed}
Is the Reeb flow of every dynamically convex contact form on $S^3$ right handed?
\end{question}

A positive answer is probably very hard to obtain, even in finite-dimensional families of interesting flows such as those appearing in Celestial Mechanics. One is then tempted to look for examples to give a negative answer, but they might not exist. In the context of the $3$-body problem, we refer to~\cite{lhuissier} for a discussion of a version of this question, and to the book~\cite{bookFK} by Frauenfelder and van Koert for a discussion on global surfaces of section, including a related conjecture of Birkhoff. The existence of genus zero global surfaces of section with prescribed binding orbits has been clarified in~\cite{HSW}. In an upcoming paper~\cite{FH} it will be shown that geodesic flows on $S^2$ with curvatures pinched by some explicit constant lift to right handed Reeb flows on $S^3$.

\begin{remark}
The dynamical convexity assumption is essential in Question~\ref{ques_right_handed}, as one can easily check. It is, however, more subtle to rule out specific types of global surfaces of sections when dynamical convexity is dropped. For instance, in~\cite{O} one finds examples of contact forms on $S^3$ without disk-like global surfaces of section. The situation in higher dimensions is still wide open, but in~\cite{MO} there are interesting new constructions for the spatial circular restricted $3$-body problem.
\end{remark}

Our second result is closely connected to the following question.

\begin{question}[HWZ~\cite{convex}]\label{ques_HWZ}
Is the minimal period among closed Reeb orbits of a dynamically convex contact form on $S^3$ equal to the contact area of some disk-like global surface of section ?
\end{question}

Starting from a nondegenerate dynamically convex contact form on $S^3$, Hutchings and Nelson~\cite{HN} were able to implement the construction of the chain complex of Cylindrical Contact Homology (CCH), originally explained by Eliashberg, Givental and Hofer in~\cite{EGH}. The arguments from~\cite{HN} rely on elementary pseudo-holomorphic curve methods. Invariance of the resulting homology is delicate and requires sophisticated technology, for instance, one can use the Polyfold Theory introduced by Hofer, Wysocki and Zehnder; see~\cite{polyfolds_survey} for a survey. It will be shown in~\cite{HHR} that elementary methods are still enough to get invariance of CCH in its lowest degree. This is enough to get the first spectral invariant $c_1^{\rm CCH}$ well-defined. Also in~\cite{HHR} it will be shown that $c_1^{\rm CCH}$ is the action of some periodic orbit with Conley-Zehnder index~$3$ realised as the asymptotic limit of a pseudo-holomorphic plane. 
%In addition, a result by Abbondandolo and Kang~\cite{AK} implies that $c_1^{\rm CCH}$ is equal to the minimal action when the contact form comes from a smooth convex domain with strictly convex boundary. 
Hence we get the following consequence of a combination of Corollary~\ref{cor_main4} below with some of the results from~\cite{HHR}: {\it ``The spectral invariant $c_1^{\rm CCH}$ of a nondegenerate dynamically convex contact form on $S^3$ is the contact area of some global surface of section.'' %Moreover, if the contact form is induced by a smooth convex domain with strictly convex boundary then the minimal period is the area of a global surface of section.
} %This will be the confirmation of a phenomenon similar to a positive answer to Question~\ref{ques_HWZ} in the nondegenerate case, where the minimal period is replaced by $c_1^{\rm CCH}$ and the global surface of section might have genus. %The convexity assumption and the results from~\cite{AK} allow us to get $c_1^{\rm CCH}$ equal to the minimal action.

\begin{theorem}\label{main4}
Let $\lambda$ be a contact form on $S^3$ that is both non-degenerate and dynamically convex up to action $C$. Suppose that a periodic Reeb orbit $P=(x,T)$ satisfies $T\leq C$ and is the asymptotic limit of a fast finite-energy plane. Then the knot $x(\R)$ spans a global surface of section for the Reeb flow.
\end{theorem}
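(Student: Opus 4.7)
The plan is to extend the given fast plane $u_0$ to a $1$-parameter family of fast finite-energy planes, all asymptotic to $P$, whose images form an open book decomposition of $S^3$ with binding $x(\R)$. Because fast planes are everywhere transverse to the Reeb flow, each page will be a disk-like global surface of section. A first observation is that $\CZ(P)=3$: since $T\leq C$, dynamical convexity up to action $C$ forces $\CZ(P)\geq 3$, while a fast plane can only be asymptotic to an orbit with $\CZ\leq 3$. Let $\M^\star\subset\Mfast$ denote the connected component through $u_0$ of the moduli space of unparameterised fast finite-energy planes asymptotic to $P$, modulo the $\R$-translation of the target.

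By the automatic transversality available for fast planes asymptotic to a $\CZ=3$ orbit, $\M^\star$ is a smooth $1$-manifold, each representative is embedded, and its projection to $S^3$ is everywhere transverse to the Reeb flow. The evaluation map $\pi:\M^\star\to S^3\setminus x(\R)$ is then a local diffeomorphism. The key step is compactness: I would show that any sequence $(u_n)\subset\M^\star$ either subconverges in $\M^\star$ or eventually leaves every compact subset of $S^3\setminus x(\R)$ under $\pi$. If neither held, SFT compactness would extract a nontrivial holomorphic building as a limit. All asymptotic orbits appearing in this building would have action at most $T\leq C$, and thus, by dynamical convexity up to action $C$, would all have $\CZ\geq 3$. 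Combining action and Fredholm-index accounting along the building with nondegeneracy (which makes the orbit set below action $C$ discrete), the exactness of $d\lambda$ on $S^3$ (ruling out sphere bubbling), and the rigidity properties of fast planes with $\CZ=3$ asymptote, I would rule out every nontrivial breaking, forcing the limit to be a single fast plane that again lies in $\M^\star$.

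Properness of $\pi$, combined with the local diffeomorphism property, upgrades $\pi$ to an open book decomposition of $S^3$ whose binding is $x(\R)$ and whose pages---the embedded fast planes---are everywhere transverse to the Reeb flow, and hence are disk-like global surfaces of section. The main obstacle is the compactness argument: one must exclude every possible broken limit using only the weaker \emph{up to action $C$} form of dynamical convexity. This works precisely because the total action available in a building limiting planes of $\M^\star$ is bounded by $T\leq C$, so every orbit that can appear as an asymptote lies in the regime where the $\CZ$ lower bound is known, and the standard HWZ exclusion of degenerate limits then goes through unchanged.
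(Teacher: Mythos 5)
Your proposal has genuine gaps, and they are concentrated exactly where Theorem~\ref{main4} goes beyond the disk-like case. First, the ``observation'' that $\CZ(P)=3$ is the converse of what is true: fastness only says that the asymptotic eigenvalue of the plane has winding $1$ in the disk frame, i.e.\ $1\leq\alpha^{<0}(P)$, which is perfectly compatible with $\CZ(P)>3$ (Corollary~\ref{cor_main4} is the implication $\CZ=3\Rightarrow$ fast, not the reverse). Second, $P$ is not assumed simply covered. When the covering multiplicity $m_0$ is larger than $1$, a fast plane asymptotic to $P$ is only a somewhere-injective immersion transverse to the flow (Lemma~\ref{lemma_fast_symplectisations}); it is not embedded, its closure is not a disk with boundary $x(\R)$, and in general it intersects its asymptotic limit. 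So the intended ``open book with disk-like pages bound by $x(\R)$'' cannot exist in this generality, and indeed the theorem's conclusion is only a global surface of section, possibly with genus.

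Third, and most seriously, the compactness step you defer to ``the standard HWZ exclusion of degenerate limits'' does not go through: that exclusion uses that the asymptotic orbit is simply covered, unknotted, of self-linking number $-1$ and index exactly $3$, none of which is available here. In the actual proof breaking is \emph{not} ruled out; rather, an SFT limit of fast planes asymptotic to $P=P_0^{m_0}$ is shown to have all its nontrivial components equal to fast planes asymptotic to lower iterates $P_0^{k}$ with $k<m_0$ (Corollary~\ref{cor_structure_of_tree}), and one descends until some $\Mfast(P_0^{m},J)$ is non-empty \emph{and} compact (Corollary~\ref{c:fast-planes-S1-family}). Even then this compact family is not assembled into a foliation or open book: it is used only to prove a uniform bound on return times to a single immersed plane (Proposition~\ref{prop_SFS_input}), and the global surface of section is then produced by the Schwartzman--Fried--Sullivan criterion, Theorem~\ref{thm_SFS}, by checking positivity of $\mu\cdot y$ for all invariant measures and positivity of the rotation number at $x(\R)$. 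This ergodic-theoretic step is absent from your argument and cannot be bypassed by the open-book route once $P$ is multiply covered or $\CZ(P)>3$.
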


In the theorem above and the corollary below a periodic Reeb orbit is a pair $P=(x,T)$ where $x$ is a periodic trajectory of the Reeb flow and $T>0$ is a period, not necessarily the primitive one.

\begin{corollary}\label{cor_main4}
Let $\lambda$ be a contact form on $S^3$ that is both nondegenerate and dynamically convex up to action $C$. Suppose that a periodic Reeb orbit $P=(x,T)$ satisfies $T\leq C$, $\CZ(P)=3$, and is the asymptotic limit of a finite-energy plane. Then the knot $x(\R)$ spans a global surface of section for the Reeb flow.
\end{corollary}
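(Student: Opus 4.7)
The plan is to deduce the corollary directly from Theorem~\ref{main4}: one just needs to argue that the hypothesis $\CZ(P)=3$ automatically forces any finite-energy plane asymptotic to $P$ to be fast. Once this promotion is established, Theorem~\ref{main4} applies verbatim and produces the desired global surface of section.

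The tool for this reduction is the HWZ asymptotic winding analysis. Recall that for a nondegenerate periodic Reeb orbit $P$, the eigensections of the asymptotic operator on $\xi|_x$ (in a fixed symplectic trivialisation) have well-defined winding numbers, and if $\alpha^-(P)$ denotes the largest winding number among eigensections belonging to negative eigenvalues, then $\CZ(P) = 2\alpha^-(P)+p(P)$ with parity $p(P)\in\{0,1\}$. The hypothesis $\CZ(P) = 3$ therefore pins things down: $\alpha^-(P)=1$ and $p(P)=1$.

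Now fix a finite-energy plane $u: \C \to \R \times S^3$ asymptotic to $P$. The HWZ asymptotic formula shows that the convergence of $u$ at infinity is governed by a negative eigenvalue of the asymptotic operator, whence the asymptotic winding satisfies $\winfty(u) \leq \alpha^-(P) = 1$. On the other hand, the standard counting formula for finite-energy planes gives
\[
\winfty(u) = \wind_\pi(u) + 1,
\]
where $\wind_\pi(u)$ is the algebraic count of zeros of the section $\pi\circ du$ of $u^*\xi$ (with $\pi\colon TS^3 \to \xi$ the projection along the Reeb vector field). Since $u$ is a plane it cannot be a cover of a trivial cylinder, so $\pi\circ du$ does not vanish identically; by the similarity principle its zeros are then isolated and each contributes nonnegatively to $\wind_\pi(u)$, forcing $\winfty(u)\geq 1$. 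Combining the two inequalities yields $\winfty(u)=1=\alpha^-(P)$, which is precisely the fast condition, and Theorem~\ref{main4} concludes. No serious obstacle is expected: the crux is the well-known HWZ observation that $\CZ(P)=3$ is the extremal case in which the asymptotic winding bound is saturated, automatically promoting any finite-energy plane asymptotic to $P$ to a fast one. The dynamical-convexity-up-to-action-$C$ hypothesis plays no role in the reduction itself and is only needed to feed Theorem~\ref{main4}.
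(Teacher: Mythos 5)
Your proposal is correct and is exactly the argument the paper has in mind: the paper's proof of the corollary is the one-line assertion that $\CZ(P)=3$ forces any finite-energy plane asymptotic to $P$ to be fast, and your winding computation ($\winfty(\util)\le\alpha^{<0}(P)=1$ from the asymptotic formula, $\winfty(\util)=\wind_\pi(\util)+1\ge 1$ from \eqref{identity_winds} and $\int u^*d\lambda>0$) is the standard justification of that assertion. No discrepancy with the paper's approach.
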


\begin{proof}
The equality $\CZ(P)=3$ implies that any finite-energy plane asymptotic to $P$ is fast. Now apply Theorem~\ref{main4}.
\end{proof}

The proof of Theorem~\ref{main4} is based on a certain class of pseudo-holomorphic planes called {\it fast}, but also uses ergodic methods (asymptotic cycles)~\cite{fried,ghys,SFS,schwartzman,sullivan}. Fast planes were originally introduced in~\cite{tese} and later used in~\cite{fast,hryn_jsg,HS_D,HLiS,elliptic} to prove several existence results on global surfaces of section. Roughly speaking, an end of a plane is in some sense a gradient trajectory of the action functional, and the results from~\cite{props1} basically say that the approach to the periodic orbit is governed by an eigenvector of an operator that plays the role of the Hessian of the action, the so-called {\it asymptotic operator}. The term ``fast'' refers to the fact that the eigenvalue of this asymptotic eigenvector has the same winding number of the most negative eigenvalue allowed, hence the approach is roughly the fastest it can be; see Definition~\ref{def_fast_plane}.

\begin{remark}
The global sections obtained from Theorem~\ref{main4} and Corollary~\ref{cor_main4} may have genus. Note that $P$ is not assumed to be simply covered, but still the global sections obtained are Seifert surfaces for the knot $x(\R)$. %This is the first time when fast planes with self-intersections are used.
\end{remark}

\noindent \textbf{Acknowledgments.} UH would like to thank V. Colin, P. Dehornoy and A. Rechtman for enlightening discussions leading to Remark~\ref{rmk_CDR} during the {\it 104e rencontre entre math\'ematiciens et physiciens th\'eoriciens} held at IRMA (2019), the organisers of this event for the invitation, and IRMA for the hospitality. UH also thanks Kai Cieliebak and Urs Frauenfelder for helpful conversations, and the hospitality during a visit to Universit\"at Augsburg. RS thanks Jungsoo Kang and Urs Frauenfelder for helpful discussions. We thank the referee for interesting and helpful feedback. RS was partially supported by the SFB/TRR 191 ``Symplectic Structures in Geometry, Algebra and Dynamics'' funded by the DFG (Projektnummer 281071066 – TRR 191). PS is partially supported by FAPESP 2016/25053-8 and CNPq 306106/2016-7. PS acknowledges the support of NYU-ECNU Institute of Mathematical Sciences at NYU Shanghai.

\section{Preliminaries}

Let $\lambda$ be a contact form on a $3$-manifold $M$. The contact structure is denoted by $\xi = \ker\lambda$.

\subsection{Periodic orbits, asymptotic operators and Conley-Zehnder indices}

The Reeb vector field $X_\lambda$ of $\lambda$ is implicitly  defined by $$ i_{X_\lambda}d\lambda=0, \qquad\qquad i_{X_\lambda}\lambda=1. $$ Its flow $\phi^t$ is called the Reeb flow. Let us fix a marked point on every periodic trajectory of $\phi^t$. A \textit{periodic Reeb orbit} is a pair $P=(x,T)$ where $x:\R\to M$ is a periodic trajectory of $\phi^t$ such that $x(0)$ is the marked point, and $T>0$ is a period. It is not required that $T$ is the primitive period. The set of periodic orbits will be denoted by $\P(\lambda)$. If $T_0>0$ is the primitive period of $x$ then $k = T/T_0 \in \N$ is called the \textit{covering multiplicity} of~$P$. The contact form $\lambda$ is said to be \textit{nondegenerate up to action $C \in (0,+\infty]$} if $1$ is not in the spectrum of $d\phi^T|_{x(0)} : {\xi}|_{x(0)} \to {\xi}|_{x(0)}$, for all $P=(x,T) \in \P(\lambda)$ such that $T\leq C$. When $C=+\infty$ we simply say that $\lambda$ is {\it nondegenerate}.

There is an unbounded operator on $L^2(x(T\cdot)^*\xi)$ 
\begin{equation*}
\eta \mapsto J(-\nabla_t\eta+T\nabla_\eta X_\lambda)
\end{equation*}
associated to a pair $(P,J)$, where $P = (x,T) \in \P$ and $J:\xi\to\xi$ is a $d\lambda$-compatible complex structure. Here $\nabla$ is a symmetric connection on $TM$ and $\nabla_t$ denotes the associated covariant derivative along the loop $t \in \R/\Z \mapsto x(Tt)$. This is called the \textit{asymptotic operator}. It does not depend on the choice of $\nabla$. It is self-adjoint when $L^2(x(T\cdot)^*\xi)$ is equipped with the inner-product
\begin{equation*}
(\eta,\zeta) \mapsto \int_{\R/\Z} d\lambda(x(Tt))(\eta(t),J(x(Tt))\zeta(t)) \ dt
\end{equation*}
Its spectrum is discrete, consists of eigenvalues whose geometric and algebraic multiplicities coincide, and accumulates at $\pm\infty$. It turns out that $\lambda$ is nondegenerate if, and only if, $0$ is never an eigenvalue of an asymptotic operator. The eigenvectors are nowhere vanishing sections of $x(T\cdot)^*\xi$ since they solve linear ODEs. Hence they have well-defined winding numbers with respect to a $d\lambda$-symplectic trivialisation $\sigma$ of $x(T\cdot)^*\xi$. The winding number is independent of the choice of eigenvector of a given eigenvalue. This allows us to talk about the winding number $$ \wind_\sigma(\nu) \in \Z $$ of an eigenvalue $\nu$ with respect to $\sigma$. For every $k\in\Z$ there are precisely two eigenvalues satisfying $\wind_\sigma = k$, multiplicities counted and, moreover, $\nu_1\leq\nu_2 \Rightarrow \wind_\sigma(\nu_1)\leq\wind_\sigma(\nu_2)$. These properties are independent of $\sigma$. These properties of the asymptotic operator have been established in~\cite{props2}. Given any $\delta \in \R$ we set
\begin{equation*}
\begin{aligned}
\alpha^{<\delta}_\sigma(P) &= \max \ \{ \wind_\sigma(\nu) \mid \nu \ \text{eigenvalue}, \ \nu<\delta \} \\
\alpha^{\geq\delta}_\sigma(P) &= \min \ \{ \wind_\sigma(\nu) \mid \nu \ \text{eigenvalue}, \ \nu\geq\delta \} \\
p^{\delta}(P) &= \alpha^{\geq\delta}_\sigma(P) - \alpha^{<\delta}_\sigma(P)
\end{aligned}
\end{equation*}
Finally we consider the constrained Conley-Zehnder index
\begin{equation}
\CZ^\delta_\sigma(P) = 2 \alpha^{<\delta}_\sigma(P) + p^{\delta}(P)
\end{equation}
Note that this is defined also in degenerate situations.

A contact form $\lambda$ is {\it dynamically convex up to action $C\in (0,+\infty]$} if $c_1(\xi,d\lambda)$ vanishes on $\pi_2 \hookrightarrow H_2$, and every contractible\footnote{This means that the loop $t\in\R/\Z \mapsto x(Tt)$ is contractible.} $P=(x,T)\in\P(\lambda)$ satisfying $T\leq C$ also satisfies $\CZ^0_{\sigma_{\rm disk}}(P) \geq 3$. Here $\sigma_{\rm disk}$ is a trivialisation that extends to a capping disk. If $C=+\infty$ then we say that $\lambda$ is {\it dynamically convex}.

\begin{remark}
Dynamical convexity was introduced by HWZ in~\cite{convex}. The assumption that $c_1(\xi,d\lambda)$ vanishes on spheres implies that the homotopy class of $\sigma_{\rm disk}$ does not depend on the choice of a capping disk.
\end{remark}

\subsection{Pseudo-holomorphic curves in symplectisations}\label{ssec_pseudo_hol_curves}

From now on we assume $M$ is closed. Let $J$ be a compatible complex structure on the symplectic vector bundle~$(\xi,d\lambda)$. Hofer~\cite{93} considers an almost complex structure $\jtil$ defined on $\R\times M$ by
\begin{equation}\label{R_inv_J}
\jtil : \partial_a \mapsto X_\lambda \qquad\qquad \jtil|_{\xi} = J
\end{equation}
where $X_\lambda$ and $\xi$ are seen as $\R$-invariant objects in $\R\times M$. Then $\jtil$ is $\R$-invariant. Consider a closed Riemann surface $(S,j)$, a finite set $\Gamma \subset S$ and a pseudo holomorphic map $$ \util = (a,u) : (S\setminus\Gamma,j) \to (\R\times M,\jtil) $$ satisfying a finite-energy condition $$ 0< E(\util) = \sup_{\phi} \int_{S\setminus\Gamma} \util^*d(\phi\lambda) < \infty $$ where the supremum is taken over the set of $\phi:\R \to [0,1]$ satisfying $\phi'\geq0$. The number $E(\util)$ is called the \textit{Hofer energy}. Such a map is called a \textit{finite-energy} map. Points in $\Gamma$ are called \textit{punctures}. A puncture $z\in\Gamma$ is \textit{positive} or \textit{negative} if $a(w)\to+\infty$ or $a(w)\to-\infty$ when $w\to z$, respectively. It is called \textit{removable} if $\limsup |a(w)| < \infty$ when $w\to z$. It turns out that every puncture is positive, negative or removable, and that $\util$ can be smoothly extended across a removable puncture; see~\cite{93}.

Let $z\in\Gamma$ and let $K$ be a conformal disk centred at $z$, i.e. there is a biholomorphism $\varphi : (K,j,z) \to (\D,i,0)$. Then $K\setminus\{z\}$ admits positive holomorphic polar coordinates $(s,t) \in [0,+\infty) \times \R/\Z$ defined by $(s,t) \simeq \varphi^{-1}(e^{-2\pi(s+it)})$, and negative holomorphic polar coordinates $(s,t) \in (-\infty,0] \times \R/\Z$ defined by $(s,t) \simeq \varphi^{-1}(e^{2\pi(s+it)})$.

\begin{theorem}[Hofer~\cite{93}]
Let $z\in\Gamma$ be a non-removable puncture, and $(s,t)$ be positive holomorphic polar coordinates at $z$. For every sequence $s_n \to +\infty$ there exist a subsequence $s_{n_j}$ and $P=(x,T) \in \P$ such that $u(s_{n_j},t) \to x(\epsilon Tt+d)$ in $C^\infty(\R/\Z,M)$, for some $d\in\R$, where $\epsilon = \pm1$ is the sign of the puncture.
\end{theorem}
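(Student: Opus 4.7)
My plan is to follow Hofer's original argument from~\cite{93}, organised into an energy/period computation, a gradient bound near the puncture, and an Arzel\`a--Ascoli extraction combined with the pseudo-holomorphic equations.

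First I would translate the pseudo-holomorphic equation into positive holomorphic polar coordinates $(s,t)\in[0,+\infty)\times \R/\Z$ at $z$, so that $\tilde u = (a,u)$ satisfies
\begin{equation*}
\partial_s a = \lambda(\partial_t u),\qquad \partial_t a = -\lambda(\partial_s u),\qquad \pi_\xi\partial_s u + J\,\pi_\xi\partial_t u = 0,
\end{equation*}
where $\pi_\xi:TM\to\xi$ is the projection along $X_\lambda$. Then I would introduce the ``mass function'' and ``period function'' on circles $\{s\}\times\R/\Z$,
\begin{equation*}
m(s) := \int_{[s,+\infty)\times\R/\Z} u^*d\lambda,\qquad T(s):=\int_{\R/\Z}\lambda(\partial_t u(s,t))\,dt = \int_{\R/\Z}\partial_s a(s,t)\,dt.
\end{equation*}
Finite Hofer energy forces $m(s)<\infty$, and non-removability ensures $m(s)>0$; combined with $\pi_\xi$-holomorphicity and Stokes one obtains $T(s) = m(s)$ plus a boundary term, and the limit $T := \lim_{s\to+\infty}T(s)$ exists, is finite and positive. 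Similarly $\lim_{s\to+\infty}(a(s,t)-Ts)/s$-type estimates will later identify the period.

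The main analytic step, and the hardest one, is the uniform gradient bound $|d\tilde u(s,t)|\le M$ for $s$ large. Here I would run Hofer's bubbling-off argument: if no such bound exists, pick a sequence $w_n = (s_n,t_n)$ with $s_n\to\infty$ and $R_n := |d\tilde u(w_n)|\to\infty$, apply the Hofer topological lemma to shift $(w_n,R_n)$ so that the rescaled maps $\tilde v_n(\zeta):=\bigl(a(w_n+\zeta/R_n)-a(w_n),\,u(w_n+\zeta/R_n)\bigr)$ have $|d\tilde v_n(0)|=1$ and uniformly bounded gradient on larger and larger disks. The Hofer energy is scale-invariant and bounded, so a subsequence converges in $C^\infty_{\rm loc}$ to a non-constant finite-energy plane $\tilde v_\infty:\C\to\R\times M$ with $\int \tilde v_\infty^*d\lambda \le m(s)$ for every $s$. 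Letting $s\to\infty$ gives $\int \tilde v_\infty^*d\lambda=0$, so $\tilde v_\infty$ reparametrises an $\R$-invariant orbit cylinder, which a standard argument shows is impossible for a plane with the normalisation $|d\tilde v_\infty(0)|=1$; this contradiction yields the gradient bound. This bubbling analysis is where I expect the bulk of the work, and the compactness of $M$ is essential to prevent the limit from escaping.

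With $|d\tilde u|$ bounded on $[s_0,+\infty)\times\R/\Z$, the loops $t\mapsto u(s_n,t)$ are equicontinuous for any sequence $s_n\to\infty$, so Arzel\`a--Ascoli produces a subsequence $s_{n_j}$ and a $C^0$ limit loop $\gamma:\R/\Z\to M$; elliptic regularity bootstraps this to $C^\infty$. Integrating $\partial_s a = \lambda(\partial_t u)$ around the circle gives $T(s_n) = \int_{\R/\Z}\lambda(\partial_t u(s_n,\cdot))\,dt \to T$, so $\int_{\R/\Z}\lambda(\dot\gamma)\,dt = T>0$. The $\xi$-part of the Cauchy--Riemann equation, together with the gradient bound and the fact that $m(s)\to 0$, shows $\pi_\xi(\partial_t u(s_n,\cdot))\to 0$ in $L^2$ (hence in $C^0$ along a further subsequence), which forces $\dot\gamma$ to be a positive multiple of $X_\lambda\circ\gamma$. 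Parametrising $\gamma(t) = x(Tt + d)$ for some periodic Reeb trajectory $x$ with period $T$ and some phase $d$ then gives the desired conclusion at a positive puncture; the negative-puncture case is obtained by replacing $s$ with $-s$, producing the sign $\epsilon=-1$ as stated.
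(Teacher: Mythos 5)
The paper states this result as a citation to Hofer's 1993 paper and gives no proof of its own, so there is nothing internal to compare against; your sketch is a reconstruction of Hofer's original argument (mass/period functions, bubbling-off to get uniform gradient bounds in cylindrical coordinates, then Arzel\`a--Ascoli plus the Cauchy--Riemann equations), and the architecture is the correct one. Two points deserve tightening. First, non-removability does \emph{not} ensure $m(s)>0$: a half-cylinder mapped into a trivial cylinder $\R\times P$ has $m\equiv 0$ yet a genuine (non-removable) puncture. What non-removability actually gives, via the gradient bound and a removable-singularity argument, is that the limit $T=\lim_{s\to\infty}T(s)$ (which exists by monotonicity of $T(s)$, since $T(s')-T(s)=\int_{[s,s']\times\R/\Z}u^*d\lambda\ge 0$, and is bounded by the energy) is nonzero; this is a lemma you should state and prove rather than fold into the definition of $m$. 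Second, your last step only extracts a $C^\infty$ limit of the \emph{loops} $u(s_{n_j},\cdot)$ and concludes $\dot\gamma=c(t)\,X_\lambda(\gamma)$ with $c(t)=\lambda(\dot\gamma(t))>0$; that alone gives a reparametrised Reeb trajectory, not the constant-speed parametrisation $x(\epsilon Tt+d)$. To get $c\equiv T$ you should pass to a $C^\infty_{\rm loc}$ limit of the translated maps $\util(\cdot+s_{n_j},\cdot)$ on $[-R,R]\times\R/\Z$: the limit is a finite-energy cylinder with vanishing $d\lambda$-area, hence of the form $(a_\infty,x\circ f)$ with $a_\infty+if$ holomorphic, and finite energy forces $f(s,t)=\epsilon Tt+d$. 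With those two repairs your proposal is a faithful account of the cited proof.
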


From now on we denote by 
\begin{equation}\label{proj_ctct_str}
\pi_\lambda : TM \to \xi
\end{equation}
the projection along $X_\lambda$.

%The following is~\cite[Definition~3.11]{fast}.

%\begin{definition}[Definition~3.11 from~\cite{fast}]\label{def_non_deg_punc}
%Let $z$ be a non-removable puncture of $\util=(a,u)$ of sign $\epsilon = \pm1$, and let $(s,t)$ denote positive holomorphic polar coordinates at $z$. Denote by $S(z)$ the connected component of the domain of $\util$ containing $z$. Then $z$ is called a \textit{non-degenerate puncture} if there exists $P=(x,T) \in \P$ such that:
%\begin{itemize}
%\item There exists $c\in\R$ such that $a(s,t)-\epsilon Ts-c \to 0$ in $C^0(\R/\Z)$ as $s\to+\infty$
%\item There exists $d\in\R$ such that $u(s,t) \to x(\epsilon Tt+d)$ in $C^0(\R/\Z)$ as $s\to+\infty$.
%\item If $\int_{S(z)} u^*d\lambda > 0$ then $\pi_\lambda \circ du(s,t) \neq 0$ if $s$ is large enough.
%\item If $u(s,t) = \exp_{x(\epsilon Tt+d)}\zeta(s,t)$, with $\zeta(s,t) \in \xi_0|_{x(\epsilon Tt+d)}$, then there exists $r>0$ such that $e^{rs}|\zeta(s,t)| \to 0$ in $C^0(\R/\Z)$ as $s\to+\infty$.
%\end{itemize}
%Here $\exp$ and $|\cdot|$ are induced by any Riemannian metric on $S^3$. The orbit $P$ is called the asymptotic limit of $\util$ at $z$.
%\end{definition}

%Some of the results from~\cite{props1} are contained in the following statement.

\begin{theorem}[HWZ~\cite{props1}]
Suppose that $\lambda$ is non-degenerate up to action $C$, and that $z$ is a non-removable puncture of a finite-energy curve $\util=(a,u)$ in $(\R\times M,\jtil)$ with Hofer energy $E(\util)\leq C$. Let $(s,t)$ be positive holomorphic polar coordinates at~$z$. There exist $P=(x,T) \in \mathcal{P}$, $d\in\R$ such that $u(s,t) \to x(\epsilon Tt+d)$ in $C^\infty(\R/\Z,M)$ as $s\to+\infty$, where $\epsilon = \pm1$ is the sign of the puncture. 
\end{theorem}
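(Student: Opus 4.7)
The plan is to bootstrap Hofer's subsequential convergence theorem to genuine $C^\infty$ convergence using the nondegeneracy assumption. Without loss of generality assume $z$ is a positive puncture, so $\epsilon=+1$; the negative case is analogous. The argument splits into three steps.

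First, I would show that the gradient of $u$ in the cylindrical coordinates $(s,t)$ is bounded on $\{s\geq s_0\}$ for some $s_0$. If not, the standard Hofer-lemma rescaling argument produces, in the limit, a nonconstant $\jtil$-holomorphic plane $\vtil$ with Hofer energy at most $E(\util)\leq C$. By Hofer's theorem, $\vtil$ is asymptotic at its puncture to some $P'=(x',T')\in\P(\lambda)$ with $T'\leq C$, and Stokes' theorem gives $\int\vtil^*d\lambda=T'$. Nondegeneracy up to action $C$ ensures $T'\geq T_{\mathrm{min}}>0$, where $T_{\mathrm{min}}$ is the smallest action of a periodic orbit of period $\leq C$. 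On the other hand, since $\int u^*d\lambda\leq E(\util)\leq C$ and Hofer's theorem applied to any sequence $s_n\to\infty$ forces $\int_{s=s_n}u^*\lambda\to T$ along a subsequence, one obtains $\int_{\{s\geq s_0\}}u^*d\lambda\to 0$ as $s_0\to\infty$. The bubble swallows a definite amount of $d\lambda$-area, producing a contradiction.

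Second, with uniform gradient bounds, the Arzel\`a--Ascoli theorem combined with elliptic bootstrapping shows that any sequence $s_n\to\infty$ has a subsequence along which the translated maps $(s,t)\mapsto(a(s+s_n,t)-a(s_n,0),u(s+s_n,t))$ converge in $C^\infty_{\mathrm{loc}}(\R\times\R/\Z,\R\times M)$ to a $\jtil$-holomorphic cylinder. Because $\int_{s_n\leq s\leq s_n+L}u^*d\lambda\to 0$ for every fixed $L>0$, any such limit has vanishing $d\lambda$-area and must be an $\R$-invariant cylinder of the form $(s,t)\mapsto(Ts+a_\infty,x(Tt+d))$ for some $(x,T)\in\P(\lambda)$ with $T\leq C$. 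Denote by $\mathcal L\subset C^\infty(\R/\Z,M)$ the collection of all loops $t\mapsto x(Tt+d)$ arising in this way. Then $\mathcal L$ is compact, and it is connected: if $\mathcal L$ admitted a separation $\mathcal L_1\sqcup\mathcal L_2$ with a positive $C^\infty$-gap, then for large $s$ the loop $u(s,\cdot)$ would lie in a small neighbourhood of $\mathcal L$ and, by continuity in $s$, be trapped in the neighbourhood of exactly one component, contradicting that both components contain subsequential limits.

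Third, nondegeneracy up to action $C$ implies, via the implicit function theorem applied to fixed points of $\phi^T$, that the set of periodic Reeb orbits of action at most $C$ is isolated in $M$. Combined with compactness and connectedness of $\mathcal L$, this forces every loop in $\mathcal L$ to parametrise a single geometric orbit $x(\R)$, so $\mathcal L=\{t\mapsto x(Tt+d):d\in D\}$ for some closed connected $D\subset\R/\Z$. The main obstacle is then to rule out that $D$ is a nontrivial arc, i.e.\ to exclude phase drift. For this I would pass to Martinet-type tubular coordinates near $x(\R)$ and write the end $\util$ as a perturbation of the linear equation $\partial_s\eta+A\eta=0$, where $A$ is essentially the asymptotic operator of $(x,T)$ in a symplectic trivialisation of $\xi|_{x(\R)}$. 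Nondegeneracy means $0\notin\mathrm{spec}(A)$, so the hyperbolic splitting of $A$ into its positive and negative spectral parts produces exponential decay estimates (via the Cauchy--Riemann exponential-decay lemmata of HWZ). These estimates pin down a single reparametrisation as the $C^\infty$-limit, collapse $D$ to a point, and complete the proof.
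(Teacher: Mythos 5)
Your outline is correct and reproduces, in the right order, the standard argument of Hofer--Wysocki--Zehnder: gradient bounds via bubbling-off (the bubble would have to carry a quantum of $d\lambda$-energy, which the end cannot supply), soft rescaling to $\R$-invariant orbit cylinders together with compactness and connectedness of the set of limit loops, and finally nondegeneracy of the asymptotic operator to produce exponential decay and exclude phase drift. Note that the paper itself gives no proof of this statement -- it is quoted verbatim from \cite{props1} -- so the comparison is with that reference, whose proof your proposal matches in all essential steps.
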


\begin{remark}
The orbit $P$ is called the \textit{asymptotic limit} of $\util$ at $z$. 
\end{remark}

Consider the space $\R/\Z\times\C$ equipped with coordinates $(\vartheta,z=x_1+ix_2)$ and contact form $\beta_0 = d\vartheta + x_1dx_2$.

\begin{definition}
A Martinet tube for $P = (x,T) \in \P$ is a smooth diffeomorphism $\Psi:\mathcal{N} \to\R/\Z\times \D$ defined on a smooth compact neighborhood $\mathcal{N}$ of $x(\R)$ such that: 
\begin{itemize}
\item $\Psi(x(T\vartheta/k)) = (\vartheta,0)$ for all $\vartheta \in \R/\Z$, where $k\in\N$ is the covering multiplicity of $P$.
\item $\lambda|_{\mathcal{N}} = \Psi^*(g\beta_0)$, where $g : \R/\Z\times \D \to (0,+\infty)$ is smooth and satisfies $g(\vartheta,0) = T/k$, $dg(\vartheta,0) = 0$ for all $\vartheta \in \R/\Z$.
\end{itemize}
\end{definition}

\begin{theorem}[HWZ~\cite{props1}, Mora-Donato~\cite{Mora}, Siefring~\cite{sie_CPAM}]\label{thm_asymptotics}
Suppose that $\lambda$ is non-degenerate up to action $C>0$, and that $z$ is a non-removable puncture of sign $\epsilon = \pm1$ of a finite-energy curve $\util=(a,u)$ with Hofer energy $E(\util)\leq C$. Let $(s,t)$ be positive or negative holomorphic polar coordinates at $z$ when $\epsilon=+1$ or $\epsilon=-1$, respectively. Consider any Martinet tube $\Psi:\mathcal{N} \to\R/\Z\times \D$ for the asymptotic limit $P$ of $\util$ at~$z$, and $s_0\gg1$ such that $|s|\geq s_0 \Rightarrow u(s,t) \in \mathcal{N}$. Write $\Psi(u(s,t)) = (\vartheta(s,t),z(s,t))$ for $|s|\geq s_0$. Up to a rotation, we can assume $u(s,0) \to x(0)$ as $\epsilon s\to+\infty$.

If $z(s,t)$ does not vanish identically then the following holds. There exist $r>0$ and an eigenvalue $\nu$ of the asymptotic operator of $(P,J)$ satisfying $\epsilon\nu<0$, such that:
\begin{itemize}
\item There exist $c,d\in\R$ and a lift $\tilde\vartheta:\R\times\R \to \R$ of $\vartheta(s,t)$ such that $$ \lim_{\epsilon s\to+\infty} \sup_{t\in\R/\Z} e^{r\epsilon s} \left( |D^\beta[a(s,t)-Ts-c]| + |D^\beta[\tilde\vartheta(s,t)-kt-d]| \right) = 0  $$ holds for every partial derivative $D^\beta = \partial^{\beta_1}_s\partial^{\beta_2}_t$, where $k$ is the covering multiplicity of $P$.
\item There exists an eigenvector of $\nu$, represented as a nowhere vanishing vector field $v(t)$ in the frame $\{\partial_{x_1},\partial_{x_2}\}$ along $P$, such that
\begin{equation*}
z(s,t) = e^{\nu s} (v(t)+R(s,t))
\end{equation*}
for some $R(s,t)$ satisfying $|D^\beta R(s,t)| \to 0$ in $C^0(\R/\Z)$ as $\epsilon s\to+\infty$, for every partial derivative $D^\beta = \partial^{\beta_1}_s\partial^{\beta_2}_t$.
\end{itemize}
\end{theorem}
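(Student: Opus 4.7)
The plan is to work inside a Martinet tube around $P$ and reduce the theorem to a spectral analysis of a perturbed Cauchy-Riemann PDE satisfied by the transverse component $z(s,t)$. By the preceding convergence theorem, some sequence of loops $u(s_n,\cdot)$ converges to a parametrization of the orbit; a flow-box argument using finite energy and non-degeneracy up to action $C$ promotes this to full $C^{0}$ convergence $u(s,\cdot) \to x(\epsilon T\cdot + d)$ as $\epsilon s \to +\infty$. After a rotation in $t$ one arranges $d=0$, so $\Psi(u(s,t)) = (\vartheta(s,t),z(s,t))$ is defined for $|s|\geq s_0$ with $|z(s,t)| \to 0$ uniformly in $t$.

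In the tube one has $\lambda = g\beta_0$ with $g(\vartheta,0) = T/k$ and $dg(\vartheta,0) = 0$. Taylor expanding the Reeb vector field and $J$ in $z$, the equation $\partial_s\util + \jtil\,\partial_t\util = 0$ decomposes into
\begin{equation*}
\partial_s z + J_0\partial_t z + S(s,t)\,z = N(s,t,z),
\end{equation*}
where $J_0$ is the standard complex structure on $\C$, $S(s,t)$ is a loop of symmetric matrices converging as $\epsilon s\to+\infty$ to a loop $S_\infty(t)$ for which $-J_0\partial_t - S_\infty$ represents the asymptotic operator of $(P,J)$ in the frame $\{\partial_{x_1},\partial_{x_2}\}$, and $N$ is superlinear in $z$. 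The components $a-Ts$ and $\tilde\vartheta - kt$ satisfy analogous driven equations.

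Next set $\gamma(s) = \|z(s,\cdot)\|^2_{L^2(\R/\Z)}$. Differentiating twice, using self-adjointness of the asymptotic operator and the spectral gap around $0$ provided by nondegeneracy up to action $C$, one derives an Agmon-type ODE inequality forcing $\gamma(s) \leq C_0 e^{-2r|s|}$ for some $r>0$. Cylinder estimates and elliptic bootstrapping upgrade this to $C^{k}$ exponential decay of $z$ in $t$. Assuming $z\not\equiv 0$ near infinity, one has $\gamma(s) > 0$ and can define $\zeta(s,\cdot) = z(s,\cdot)/\sqrt{\gamma(s)}$ together with $\mu(s) = -\gamma'(s)/(2\gamma(s))$. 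Using the PDE and the just-established exponential decay of the nonlinear term, $\mu(s)$ converges to some real $\nu$, the family $\zeta(s,\cdot)$ is precompact in $C^\infty(\R/\Z)$, and any limit is an eigenvector of the asymptotic operator for the eigenvalue $\nu$, with $\epsilon\nu < 0$ forced by the sign of the decay. The hard part, and the technical core of the original works of HWZ, Mora-Donato and Siefring, is to pin down \emph{uniqueness} of the limiting eigenvector $v(t)$: this is done by an infinite-dimensional Lyapunov/monotonicity argument, an analog of a center-stable manifold extraction for the parabolic equation governing $z$.

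To finish, write $z(s,t) = e^{\nu s}(v(t) + R(s,t))$ and plug into the PDE. Since $\nu$ is isolated in the spectrum, the linearized equation for $R$ has a spectral gap, whence $R \to 0$ exponentially in $C^\infty$. Finally, for the remaining components integrate the equations for $a$ and $\tilde\vartheta$ in $s$, using the now-proven exponential decay of $z$ and of the Taylor remainder of $g$ around $z=0$; one obtains constants $c,d \in \R$ together with exponential decay of $a(s,t)-Ts-c$ and $\tilde\vartheta(s,t) - kt - d$ and of all their partial derivatives, which is exactly the first bullet of the conclusion.
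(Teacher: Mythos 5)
This theorem is not proved in the paper at all: it is quoted from the literature (HWZ~\cite{props1}, Mora-Donato~\cite{Mora}, Siefring~\cite{sie_CPAM}), so there is no internal proof to compare against, only those references. Your outline is a faithful reconstruction of the strategy of the cited works: reduction in a Martinet tube to a perturbed Cauchy--Riemann equation for the transverse component, exponential decay of $\gamma(s)=\|z(s,\cdot)\|_{L^2}^2$ via a spectral-gap/Agmon-type differential inequality, extraction of the asymptotic eigenvalue and eigenvector from the normalized sections $z(s,\cdot)/\sqrt{\gamma(s)}$, and integration of the $a$- and $\vartheta$-equations using the decay of $z$; the sign condition $\epsilon\nu<0$ is correctly accounted for. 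Be aware, however, that what you have written is a roadmap rather than a proof: the two genuinely hard steps --- showing that every subsequential limit of the normalized sections is the \emph{same} eigenvector (not merely that all limits are eigenvectors of a common eigenvalue $\nu$), and upgrading the leading-order formula to exponential decay of \emph{all} partial derivatives of the remainder $R$, which is precisely the content added by Mora-Donato and Siefring beyond the original HWZ asymptotics --- are named but not carried out, and they constitute the bulk of the cited papers. As a summary of how the result is established in the literature your account is accurate; as a self-contained proof it is incomplete at exactly those points.
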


The alternative $z(s,t) \equiv 0$ can be expressed independently of coordinates as saying that the end of the domain of $\util$ corresponding to the puncture is mapped into the trivial cylinder over the asymptotic limit. In this case we say that $\util$ has \textit{trivial} asymptotic behaviour at the puncture. Otherwise, the asymptotic behaviour is said to be \textit{nontrivial} at the puncture.

\begin{remark}\label{rmk_asymp_evalue}
The eigenvalue $\nu$ provided by Theorem~\ref{thm_asymptotics} is called the asymptotic eigenvalue of $\util$ at the puncture $z$.
\end{remark}

Let us recall some of the invariants introduced in~\cite{props2} in the $\R$-invariant case. Let $\util=(a,u)$ be a finite-energy curve on $(\R\times M,\jtil)$, defined on a connected domain. Assume that $\lambda$ is nondegenerate up to action $E(\util)$. It can be shown that if $\pi_\lambda \circ du$ does not vanish identically then its zeros are isolated and count positively. Theorem~\ref{thm_asymptotics} further implies that there are finitely many zeros in this case. HWZ~\cite{props2} define
\begin{equation}\label{def_wind_pi}
\wind_\pi(\util) \geq 0
\end{equation}
to be the algebraic count of zeros in case $\pi_\lambda \circ du$ does not vanish identically. Fix a $d\lambda$-symplectic trivialisation $\sigma$ of $u^*\xi$. Let $z$ be a puncture of $\util$ with asymptotic limit $P=(x,T)$. The asymptotic behaviour described in Theorem~\ref{thm_asymptotics} allows one to deform $\sigma$ so that it extends to a trivialisation of $x(T\cdot)^*\xi$. Let $\wind_\infty(\util,z,\sigma) \in \Z$ be defined to be the winding of the asymptotic eigenvalue of $\util$ at $z$ with respect to the extension of $\sigma$ to $x(T\cdot)^*\xi$. Finally we consider $$ \wind_\infty(\util) = {\sum}_+ \wind_\infty(\util,z,\sigma) - {\sum}_-  \wind_\infty(\util,z,\sigma) $$ where $\Sigma_+$ denotes a sum over the positive punctures, and $\Sigma_-$ is a sum over the negative punctures. Standard degree theory shows that 
\begin{equation}\label{identity_winds}
\wind_\pi(\util) = \wind_\infty(\util) - \chi + \#\{\text{punctures}\}
\end{equation}
holds provided $\int u^*d\lambda > 0$. Note that $\wind_\infty(\util)$ does not depend on the choice of trivialisation $\sigma$ of $u^*\xi$.

Denote by $(\overline{\C} = \C \cup \{\infty\},i)$ the Riemann sphere. For the next two definitions consider a finite-energy plane $\util = (a,u) : (\C,i) \to (\R\times M,\jtil)$ and assume that $\lambda$ is nondegenerate up to action $E(\util)$. By Stokes theorem, $\infty$ must be a positive puncture, and the similarity principle implies that $\int_\C u^*d\lambda>0$.

\begin{definition}\label{def_fast_plane}
The plane $\util$ is said to be \textit{fast} if $\wind_\infty(\util)=1$.
\end{definition}

\begin{definition}\label{def_cov_plane}
The covering multiplicity ${\rm cov}(\util)$ of the plane $\util$ is the covering multiplicity of its asymptotic limit.
\end{definition}

Fast planes in symplectisations were originally introduced in~\cite{fast}.

\begin{lemma}\label{lemma_fast_symplectisations}
If $\util=(a,u)$ is a fast plane then $\util$ is somewhere injective and the map $u:\C\to M$ is an immersion transverse to $X_\lambda$.
\end{lemma}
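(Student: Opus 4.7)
The plan is to extract the geometric content of the hypothesis $\wind_\infty(\util)=1$ from the identity \eqref{identity_winds}, then deduce the three assertions in order.

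First I would compute $\wind_\pi(\util)$. For a plane the compactified domain is $\overline{\C}$ with a single puncture at $\infty$, so $\chi=2$ and $\#\{\text{punctures}\}=1$, and \eqref{identity_winds} gives $\wind_\pi(\util)=\wind_\infty(\util)-1=0$. Since $\int_\C u^*d\lambda>0$, the section $\pi_\lambda\circ du$ does not vanish identically; as its zeros count positively and sum to zero, $\pi_\lambda\circ du$ must be nowhere vanishing. Viewed as a complex-linear bundle map between the complex line bundles $(T\C,i)$ and $(u^*\xi,J)$, it is therefore a bundle isomorphism at every point of $\C$.

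From this nonvanishing I would extract both the immersion and the transversality statements directly. For any nonzero $v\in T_z\C$ the vector $\pi_\lambda(du(v))\in\xi_{u(z)}$ is nonzero, so $du(v)\neq 0$, and $u$ is an immersion. Moreover $\pi_\lambda\circ du(T_z\C)=\xi_{u(z)}$, so $du(T_z\C)+\R X_\lambda(u(z))=T_{u(z)}M$, i.e.\ $u$ is transverse to $X_\lambda$.

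The step I expect to be the main subtlety is somewhere injectivity. My plan is to invoke the standard factorization result for finite-energy pseudoholomorphic curves: if $\util$ were not somewhere injective, it would factor as $\util=\wtil\circ\phi$ with $\wtil$ a somewhere injective finite-energy plane and $\phi:\C\to\C$ a nonconstant holomorphic map. Since $\util$ has a single positive puncture at $\infty$ and so does $\wtil$, the map $\phi$ extends to a branched covering of $\overline\C$ fixing $\infty$, hence is a polynomial of degree $d\ge 1$. One route to a contradiction when $d\ge 2$ is that every such polynomial has critical points, at which $d\util$ fails to be injective, violating the immersion property already proved. An independent route is to observe that the asymptotic eigenvector of $\util$ is the $d$-fold pullback of that of $\wtil$ and the disk trivializations transform accordingly, giving $\wind_\infty(\util)=d\cdot\wind_\infty(\wtil)\ge d$, which contradicts $\wind_\infty(\util)=1$. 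Either way we are forced into $d=1$, i.e.\ $\util$ is somewhere injective.
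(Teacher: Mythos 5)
Your proposal is correct and follows essentially the same route as the paper: compute $\wind_\pi(\util)=\wind_\infty(\util)-1=0$ from \eqref{identity_winds}, conclude that $\pi_\lambda\circ du$ is nowhere vanishing (giving the immersion transverse to $X_\lambda$), and rule out multiple covers because a degree~$\ge 2$ polynomial factor would produce a critical point contradicting the immersion property. The alternative winding-number argument you sketch for the last step is a valid variant but not needed.
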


\begin{proof}
That $u$ is an immersion transverse to $X_\lambda$ follows from~\eqref{def_wind_pi} and~\eqref{identity_winds}. If $\util$ is not somewhere injective then it covers another plane via a polynomial map of degree~$\geq2$, but this forces $\util$ to have critical points, in contradiction to $u$ being an immersion; here we used that the Cauchy-Riemann equations force a critical point to be a zero of the derivative of $\util$.
\end{proof}

\subsection{Asymptotic cycles}\label{ssec_asymp_cycles}

Here we explain the basics on asymptotic cycles, and state the main result from~\cite{SFS}. Let $\phi^t$ be a smooth flow on a smooth, closed, oriented and connected $3$-manifold $M$, and let $L$ be a link consisting of (non-constant) periodic orbits. The set of $\phi^t$-invariant Borel probability measures on $M\setminus L$ is denoted by $\Pscr_\phi(M\setminus L)$. Fix an auxiliary Riemannian metric $g$ on $M$. If $p \in M\setminus L$ is recurrent and the sequence $T_n \to +\infty$ satisfies $\phi^{T_n}(p) \to p$, then we denote by $k(T_n,p)$ loops obtained by concatenating to $\phi^{[0,T_n]}(p)$ a $g$-shortest path from $\phi^{T_n}(p)$ to $p$. With $\mu \in \Pscr_\phi(M\setminus L)$ and $y\in H^1(M\setminus L;\R)$ fixed, one can use the Ergodic Theorem to show that $\mu$-almost all points $p \in M\setminus L$ have the following properties: $p$ is recurrent, and the limits
\begin{equation*}
\lim_{n\to+\infty} \frac{\left<y,k(T_n,p)\right>}{T_n}
\end{equation*}
exist independently of $T_n$ and $g$, and define a $\mu$-integrable function $f_{\mu,y}$. The integral 
\begin{equation*}
\mu \cdot y := \int_{M\setminus L} f_{\mu,y} \ d\mu
\end{equation*}
is, by definition, the \textit{intersection number} of $\mu$ and $y$.

If $\gamma$ is the periodic orbit given by a connected component of $L$, then $\xi_\gamma = TM|_\gamma/T\gamma$ is a rank-$2$ vector bundle over $\gamma$. It carries an orientation induced by the ambient orientation and the flow orientation on $\gamma$. A positive frame of $\xi_\gamma$ allows one to identify $\xi_\gamma \simeq \gamma \times \C \simeq \R/T_\gamma\Z \times \C$, where $T_\gamma>0$ is the primitive period. If $t$ is the coordinate on $\R/T_\gamma\Z$ (given by the flow) and $\theta \in \R/2\pi\Z$ is the polar angle on $\C^*$ then $\{dt,d\theta\}$ is a basis of $H^1((\xi_\gamma\setminus0)/\R_+;\R)$. With the aid of any exponential map the class $y$ induces a class in this homology group that can be written as $pdt+qd\theta$. The coefficients $p,q\in\R$ depend only on $y$ and on the chosen frame. If $u$ is a nonzero vector in $\xi_\gamma$ then using the frame  we can write $d\phi^t \cdot u \simeq r(t)e^{i\theta(t)}$ with smooth functions $r(t)>0,\theta(t)$. The rotation number
\begin{equation}\label{def_rot_number}
\rho^y(\gamma) :=  \frac{T_\gamma}{2\pi} \left( p + q \lim_{t\to+\infty} \frac{\theta(t)}{t} \right)
\end{equation}
is independent of the choice of frame, and of the vector $u$.

The following statement is a refinement of a result due to Fried~\cite{fried}, see also Sullivan~\cite{sullivan}.

\begin{theorem}[\cite{SFS}]\label{thm_SFS}
Let $b \in H_2(M,L;\Z)$ be induced by an oriented Seifert surface with boundary $L$, and denote by $b^* \in H^1(M\setminus L;\R)$ the class dual to $b$. Consider the following assertions:
\begin{itemize}
\item[(i)] $L$ bounds a global surface of section representing $b$.
\item[(ii)] $L$ binds an open book decomposition whose pages are global surfaces of section representing $b$.
\item[(iii)] The following hold:
\begin{itemize}
\item[(a)] $\rho^{b^*}(\gamma)>0$ for every connected component $\gamma \subset L$.
\item[(b)] $\mu\cdot b^*>0$ for every $\mu \in \Pscr_\phi(M\setminus L)$.
\end{itemize}
\end{itemize}
Then (iii) $\Rightarrow$ (ii) $\Rightarrow$ (i) holds. Moreover, (i) $\Rightarrow$ (iii) holds $C^\infty$-generically.
\end{theorem}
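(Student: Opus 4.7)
My plan is to establish the chain of implications, with most of the work concentrated on (iii) $\Rightarrow$ (ii). The implication (ii) $\Rightarrow$ (i) is immediate: any page of an open book decomposition with binding $L$ is, by definition, a global surface of section with boundary $L$, and if the pages represent the relative class $b$ then (i) holds. For (i) $\Rightarrow$ (iii) in the $C^{\infty}$-generic sense I would argue as follows. If $\Sigma$ is a global surface of section representing $b$, then the flow is transverse to $\Sigma$ in a neighborhood of $\partial\Sigma=L$ and the monodromy near each $\gamma\subset L$ rotates about $\gamma$ with a definite sign, giving $\rho^{b^{*}}(\gamma)>0$. For (iii)(b) I would pick a smooth representative of $b^{*}$ supported off a neighborhood of $L$ and equal to $d\tau$ off $\Sigma$, where $\tau$ is the first return time function, and then use Birkhoff's ergodic theorem: for $\mu$-a.e.\ point $p$ and $\mu$-a.e.\ direction $y=b^{*}$, the integrand $f_{\mu,b^{*}}$ equals (the reciprocal of) the average return time, which is strictly positive generically, since nongeneric examples are precisely those in which an invariant measure concentrates on an orbit whose return time blows up.

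The main work is the implication (iii) $\Rightarrow$ (ii), which follows the Schwartzman--Sullivan--Fried scheme. The first step would be to view the flow-invariant probability measures on $M\setminus L$, together with the $1$-dimensional ``boundary measures'' supported on the components of $L$, as defining a closed cone $\mathcal{C}$ in the space of Schwartzman asymptotic $1$-cycles in $H_{1}(M\setminus L;\R)$, where each $\gamma\subset L$ contributes a boundary cycle built from the linearized return (so pairing with $b^{*}$ yields $\rho^{b^{*}}(\gamma)$ up to a positive factor, as in~\eqref{def_rot_number}). Hypothesis (iii) translates into the statement that $b^{*}\in H^{1}(M\setminus L;\R)$ is strictly positive on $\mathcal{C}\setminus\{0\}$. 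A Hahn--Banach style separation argument in the space of flow-invariant $1$-currents on $M\setminus L$, applied to the closed convex cone $\mathcal{C}$ and the affine hyperplane $\langle b^{*},\cdot\rangle=1$, then yields a smooth closed $1$-form $\omega$ on $M\setminus L$ representing $b^{*}$ such that $\omega(X_{\phi})>0$ pointwise on $M\setminus L$. This is the classical Fried ingredient.

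The refinement needed here, and the step I expect to be the main obstacle, is to upgrade $\omega$ from a nonsingular closed $1$-form transverse to $\phi^{t}$ to a bona fide open book with binding $L$. Near a component $\gamma$ of $L$ with primitive period $T_{\gamma}$, I would choose coordinates $(t,z)\in\R/T_{\gamma}\Z\times\D$ adapted to $\gamma$ so that the flow is conjugated near $\gamma$ to its Floquet normal form, write $b^{*}\simeq p\,dt+q\,d\theta$ on the associated punctured solid torus, and use (iii)(a), which reads $p+q\lim_{t\to\infty}\theta(t)/t>0$, to modify $\omega$ inside a smaller tubular neighborhood so that it takes the normal form $p\,dt+q\,d\theta$ exactly. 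Positivity of $\rho^{b^{*}}(\gamma)$ ensures that $\omega(X_{\phi})$ remains positive after this modification, and that the fibration $M\setminus L\to S^{1}$ defined by integrating $\omega$ extends across $L$ to an open book whose pages represent $b$. The delicate points are the simultaneous control of the interior positivity and the boundary normal form (a partition-of-unity interpolation must be done so that $\omega(X_{\phi})$ does not change sign), and verifying that the resulting fibration closes up combinatorially to an open book; both amount to a careful but essentially local analysis near each binding orbit.
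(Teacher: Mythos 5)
This theorem is imported: it is quoted from~\cite{SFS} and the paper under review contains no proof of it, so there is no internal argument to compare yours against. That said, your outline is the standard Schwartzman--Fried--Sullivan scheme, which is indeed what~\cite{SFS} carries out: (ii)~$\Rightarrow$~(i) is formal, (iii)~$\Rightarrow$~(ii) goes through the cone of asymptotic cycles, a Hahn--Banach separation producing a closed $1$-form representing $b^*$ that is positive on the flow, and an integration/normal-form step turning the resulting fibration of $M\setminus L$ into an open book with binding $L$.

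Two places where your sketch is thinner than what the actual argument requires. First, the binding. The device in~\cite{SFS} is to blow up $M$ along $L$, replacing each component $\gamma$ by the torus $(\xi_\gamma\setminus 0)/\R_+$ of normal directions; the flow extends to the boundary tori by the projectivized linearized flow, and hypothesis (iii)(a) is precisely the statement that \emph{every} invariant measure supported on a boundary torus pairs positively with the pulled-back class $b^*$, written there as $p\,dt+q\,d\theta$. Your ``one boundary cycle per component built from the linearized return'' elides this: a priori the boundary torus flow can carry many invariant measures, and one must check that all of them are controlled by the single number $\rho^{b^*}(\gamma)$. This works because the projectivized linear flow on a rank-$2$ oriented bundle over a circle has a rotation number independent of the initial direction $u$ (as the paper notes after~\eqref{def_rot_number}), but it is a point that has to be made; once made, (iii)~$\Rightarrow$~(ii) reduces to the Schwartzman--Fried criterion on a compact manifold with torus boundary, and the compactness of the normalized cone $\mathcal{C}$ needed for the separation argument comes from weak-$*$ compactness of the invariant probability measures of the blown-up flow. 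Second, your argument for (i)~$\Rightarrow$~(iii) overclaims: the assertion that the monodromy near $\gamma$ ``rotates with a definite sign'' is exactly what fails for non-generic flows (e.g.\ when the linearized return map is the identity in a page-adapted frame, giving $\rho^{b^*}(\gamma)=0$), and likewise $\mu\cdot b^*$ can degenerate to $0$ when return times are unbounded. Both quantities are always nonnegative for a global surface of section; strict positivity is only generic, which is why the theorem states that implication only $C^\infty$-generically. With these two points repaired, your route coincides with the proof in~\cite{SFS}.
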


\section{Proof of Theorem~\ref{main3}}\label{sec_thm_asymp_cycles}

The main input in the proof is the following statement proved with pseudo-holomorphic curves.

\begin{theorem}[\cite{hryn_jsg}]\label{thm_jsg}
Let $\lambda$ be any dynamically convex contact form on $(S^3,\xi_0)$. Then a periodic Reeb orbit bounds a disk-like global surface of section if, and only if, it is unknotted and has self-linking number $-1$.
\end{theorem}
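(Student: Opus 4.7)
The statement has two directions. The forward (``only if'') direction is essentially topological, while the converse is where pseudo-holomorphic curves enter.

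For the forward direction, suppose $D\subset S^3$ is a disk-like global surface of section with boundary $P = x(\R)$. Then $P$ is the boundary of an embedded disk, hence unknotted. To compute the self-linking number I would use $D$ itself as a Seifert surface: push $P$ off itself along a nonvanishing section of $\xi|_P$ given by a trivialization that extends over $D$, and count the algebraic intersection of the push-off with $D$. Since $X_\lambda$ is transverse to $\mathrm{int}(D)$ and tangent to $\partial D$, the Reeb flow rotates $\xi|_P$ relative to any disk-extending trivialization by exactly one full negative turn near the binding, yielding self-linking number $-1$. This expresses the standard fact that the contact structure twists once negatively around the binding of any compatible open book with disk pages.

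For the converse, the plan is to produce a fast finite-energy plane $\util=(a,u):(\C,i)\to(\R\times S^3,\jtil)$ asymptotic to $P$, and then promote it to a stable finite-energy foliation in the sense of HWZ whose projections to $S^3$ give an open book decomposition with disk-like pages binding $P$. Constructing the initial fast plane is the main step and the hard part. One natural strategy exploits the topological hypothesis: since $P$ is unknotted with self-linking number $-1$, it is transversely isotopic to a Hopf fibre, for which an obvious holomorphic plane exists in the standard $\R$-invariant model. One then runs either a Bishop family / filling argument anchored on a totally real perturbation of a Seifert disk for $P$, or a symplectic cobordism with SFT neck-stretching between $\lambda$ and the standard contact form, extracting a limiting finite-energy plane asymptotic to $P$. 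Dynamical convexity is essential here: every possible asymptotic limit of a bubble has $\CZ\geq 3$, and together with the nonnegativity $\wind_\pi\geq 0$ in~\eqref{identity_winds}, this blocks fragmentations into SFT buildings with nontrivial intermediate levels. The topological data (unknot, self-linking $-1$) simultaneously pins down the homotopy class of the $d\lambda$-trivialization of $u^*\xi$ along the asymptotic limit so that $\wind_\infty(\util)=1$, i.e.\ $\util$ is fast.

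Given one fast plane, promoting it to the foliation follows a well-established pattern. Automatic transversality for fast planes yields a smooth two-parameter family of nearby planes; Siefring's intersection theory combined with SFT compactness rules out collisions between pages and controls the boundary of the moduli space; dynamical convexity rules out bubbling into configurations involving intermediate asymptotic limits of low Conley--Zehnder index. Lemma~\ref{lemma_fast_symplectisations} guarantees that each page is embedded and transverse to $X_\lambda$. The resulting stable foliation of $\R\times(S^3\setminus P)$ descends to an open book on $S^3$, each page of which is a disk-like global surface of section bounded by $P$. The main obstacle is thus the construction of the first fast plane asymptotic to $P$, where one must simultaneously exploit dynamical convexity (to prevent compactness failures) and the precise topological data (to force $\wind_\infty=1$); by contrast the foliation step is by now fairly standard technology.
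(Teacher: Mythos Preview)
This theorem is not proved in the present paper; it is quoted from \cite{hryn_jsg} and used as a black-box input for the proof of Theorem~\ref{main3}. There is therefore no in-paper argument to compare your sketch against.

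Your outline does reflect the broad strategy of \cite{hryn_jsg}, but two points deserve flagging. First, Lemma~\ref{lemma_fast_symplectisations} only gives that $u$ is an \emph{immersion} transverse to $X_\lambda$; embeddedness of the pages requires a separate intersection-theoretic argument, which you have folded into the phrase ``Siefring's intersection theory \dots\ rules out collisions'' without indicating how it interacts with the asymptotic data. Second, and more substantively, Theorem~\ref{thm_jsg} carries no nondegeneracy hypothesis on $\lambda$, so neither the SFT compactness theorem nor the asymptotic formula of Theorem~\ref{thm_asymptotics} is directly available. A large portion of the work in \cite{hryn_jsg} is precisely to handle the possibly degenerate case via approximation by nondegenerate contact forms together with uniform control on the resulting families of fast planes; your sketch treats this as routine, when it is in fact the technical heart of the cited result. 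The construction of an initial fast plane asymptotic to the \emph{given} orbit $P$ (as opposed to some orbit produced by a bubbling limit) is likewise more delicate than a generic neck-stretching or Bishop-family argument suggests.
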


Here there are no hidden genericity assumptions, the only assumption is that of dynamical convexity. A disk-like global surface of section $D$ spanned by some unknotted, self-linking number $-1$ periodic orbit $\gamma=\partial D$ obtained from the above result has the following property: the first return time
\begin{equation*}
\tau : D\setminus\gamma \to (0,+\infty) \qquad\qquad \tau(p) \ = \ \inf \ \{ t>0 \mid \phi^t(p) \in D \}
\end{equation*}
is bounded, i.e.
\begin{equation}\label{upper_bound_return_time}
\sup_{p\in D\setminus\gamma} \tau(p) \ < +\infty.
\end{equation}
Since $D$ is a global surface of section, it follows from~\eqref{upper_bound_return_time} that there exists $L>0$ such that $\phi^{[0,L]}(q) \cap D \neq\emptyset$ for every $q \in S^3\setminus\gamma$.

Let $\gamma_1,\dots,\gamma_N$ be a collection of unknotted, self-linking number $-1$ periodic Reeb orbits. These orbits are taken as knots, i.e. primitive orbits, oriented by the flow. Consider a disk-like global surface of section $D_i$ spanned by $\gamma_i$, provided by Theorem~\ref{thm_jsg}, oriented in such a way that the identity $\partial D_i = \gamma_i$  takes orientations into account. Algebraically counting intersections with $D_i$ induces a cohomology class $y_i \in H^1(S^3\setminus\gamma_i;\R)$. Denoting inclusion maps by $\iota_j:S^3 \setminus \cup_i\gamma_i \to S^3\setminus\gamma_j$ we get a cohomology class
\begin{equation}\label{class_y}
y = \sum_i \iota_i^*y_i \ \in \ H^1(S^3\setminus\cup_i\gamma_i;\R).
\end{equation}
Denote also
\begin{equation}\label{l_ij}
\ell_{ij} = \link(\gamma_i,\gamma_j) \geq 1
\end{equation}
which are positive integers since all $D_i$ are global surfaces of section.

Let $T_i$ denote the primitive period of $\gamma_i$. With $i$ fixed consider a small smooth compact neighbourhood $\mathcal{N}_i$ and a smooth, orientation preserving, diffeomorphism $\Psi_i:\mathcal{N}_i \to \R/T_i\Z \times \D$ such that $\Psi_i\circ \phi^t\circ \Psi_i^{-1}(0,0)=(t,0)$. Here $\D \subset \C$ denotes the unit disk oriented by the complex orientation. Up to twisting, we may assume that $\Psi_i$ is aligned with $D_i$, i.e. if $\epsilon>0$ is small then the linking number of the loop $t\mapsto \Psi_i^{-1}(t,\epsilon)$ with $\gamma_i$ is equal to zero. Denote by $re^{i\theta}$ the polar coordinates on $D_i$. It follows that with respect to the basis $\{dt/T_i,d\theta/2\pi\}$ of $\R/T_i\Z \times (\D\setminus\{0\})$ we can write
\begin{equation*}
(\Psi_i)_*y = \left( \sum_{j\neq i} \ell_{ij} \right) \frac{dt}{T_i} + \frac{d\theta}{2\pi}
\end{equation*}
It follows from this and from the definition of the rotation number~\eqref{def_rot_number} that
\begin{equation}
\begin{aligned}
2\pi \rho^y(\gamma_i) &= T_i \left( \sum_{j\neq i} \frac{\ell_{ij}}{T_i} + \frac{1}{2\pi} \lim_{t\to+\infty} \frac{\theta(t)}{t} \right) \\
&= \sum_{j\neq i} \ell_{ij} + \lim_{t\to+\infty} \frac{\theta(t)/2\pi}{t/T_i} \\
&\geq \lim_{t\to+\infty} \frac{\theta(t)/2\pi}{t/T_i}
\end{aligned}
\end{equation}
where~\eqref{l_ij} was used in the third line. We claim that this limit is strictly positive. This will follow from $\CZ(\gamma_i)\geq 3$ together with ${\rm sl}(\gamma_i)=-1$. Here we write $\CZ$ for the Conley-Zehnder index in a global $d\lambda$-symplectic frame of $(\xi_0,d\lambda)$. In fact, the global $d\lambda$-symplectic frame of $\xi_0$ rotates ${\rm sl}(\gamma_i)=-1$ turns with respect to a $d\lambda$-symplectic of $\xi|_{\gamma_i}$ aligned with $D_i$. It turns out that there exists $\alpha_i \in \R$ such that $\CZ(\gamma_i^k) = 2 \lfloor k\alpha_i \rfloor + p(\gamma_i^k)$ for every $k\geq1$, where $|p(\gamma_i^k)| \leq 1$, and that if $\CZ(\gamma_i) \geq 3$ then $\alpha_i>1$. Hence 
\begin{equation*}
\lim_{t\to+\infty} \frac{\theta(t)/2\pi}{t/T_i} = \lim_{k\to+\infty} \frac{\CZ^{D_i}(\gamma_i^k)}{2k} = \lim_{k\to+\infty} \frac{\CZ(\gamma_i^k)-2k}{2k} = \alpha_i - 1 > 0.
\end{equation*}
Hence we are done checking
\begin{equation}
\rho^y(\gamma_i) > 0 \qquad \forall i
\end{equation}
which is (iii-a) in Theorem~\ref{thm_SFS}.

Now we check (iii-b). Let $\mu \in \Pscr_\phi(S^3\setminus\cup_i\gamma_i)$ be arbitrary. As explained in subsection~\ref{ssec_asymp_cycles}, there exists a Borel set $E \subset M\setminus\cup_i\gamma_i$ contained in the set of recurrent points such that $\mu(E)=1$, and for all $p\in E$ the limits $\lim_{n\to\infty} \left<y,k(T_n,p)\right>/t_n$ exist independently of the sequence $t_n\to +\infty$ satisfying $\phi^{t_n}(p) \to p$ and define a function $f_{\mu,y} \in L^1(\mu)$ whose integral is $\mu\cdot y$. Since each $D_i\setminus\gamma_i$ is transverse to the flow we conclude that $\mu(E\setminus \cup_iD_i) = 1$. Fix $p\in E\setminus \cup_iD_i$ and a sequence $t_n\to +\infty$ satisfying $\phi^{t_n}(p) \to p$. Then using the (positive) transversality of the flow with all the surfaces $D_i\setminus\gamma_i$
\begin{equation}\label{est_number_int_pts_1}
n \gg 1 \qquad \Rightarrow \qquad \left<y,k(T_n,p)\right> = \sum_i \# \{t\in[0,T_n] \mid \phi^t(p) \in D_i \}
\end{equation}
But
\begin{equation}\label{est_number_int_pts_2}
\# \{t\in[0,T_n] \mid \phi^t(p) \in D_i \} \geq \frac{T_n}{\sup \ \tau_i} - 1
\end{equation}
where $\tau_i$ is the return time function of $D_i$. Recall that $\sup \tau_i < +\infty$~\eqref{upper_bound_return_time}. Plugging~\eqref{est_number_int_pts_2} into~\eqref{est_number_int_pts_1} we obtain
\begin{equation}
\frac{\left<y,k(T_n,p)\right>}{T_n} \geq \sum_i \left( \frac{1}{\sup \ \tau_i} - \frac{1}{T_n} \right)
\end{equation}
Taking the limit as $n\to\infty$
\begin{equation}
f_{\mu,y} \geq \sum_i \frac{1}{\sup \ \tau_i} \ \text{($\mu$-almost everywhere)} \qquad \Rightarrow \qquad \mu\cdot y \geq \sum_i \frac{1}{\sup \ \tau_i}>0
\end{equation}
and we are done checking (iii-b). A direct application of Theorem~\ref{thm_SFS} concludes the proof of Theorem~\ref{main3}.

\section{Proof of Theorem~\ref{main4}}\label{sec_fast_planes}

Let $P=(x,T=m_0T_0)$ be a periodic Reeb orbit, with multiplicity $m_0$, of a defining contact form $\lambda$ on $(S^3,\xi_0)$. Here $T_0$ denotes the primitive period of~$x$. Throughout this section we denote by $\tau$ a global $d\lambda$-symplectic trivialisation of $\xi_0$. Assume that $\lambda$ is dynamically convex up to action $T$, and also that $\lambda$ is nondegenerate up to action $T$.

\begin{proposition}\label{prop_SFS_input}
If $\util$ is a fast plane asymptotic to $P$ then there exists $a>0$ such that 
\begin{equation}
\#\{t\in[0,T] \mid \phi^t(p) \in u(\C)\} \geq \left\lfloor \frac{T}{a} \right\rfloor
\end{equation}
holds for every $p\in S^3 \setminus x(\R)$ and every $T\geq0$.
\end{proposition}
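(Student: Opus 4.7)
Plan:

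I would split the argument into a local analysis near the puncture of $\util$ and a global step. For the local part, fix a Martinet tube $\Psi : \mathcal{N}\to \R/\Z\times\D$ for $P$ with covering multiplicity $m_0$ and apply Theorem~\ref{thm_asymptotics} in positive holomorphic polar coordinates $(s,t)$ at the puncture. Writing $\Psi\circ u(s,t)=(\vartheta(s,t),z(s,t))$ gives $\vartheta = m_0 t + d + o(1)$ and $z(s,t) = e^{\nu s}(v(t)+R(s,t))$ with $\nu<0$. The hypothesis $\winfty(\util) = 1$ says exactly that the eigenvector $v$ winds once around $0\in\C$, so $t\mapsto \arg v(t)$ is a diffeomorphism $\R/\Z\to\R/2\pi\Z$; an implicit-function argument applied to the equations $\arg z=\arg v(t)$ and $|z|=e^{\nu s}|v(t)|$ then shows that for $\delta>0$ small, $u(\C)\cap\Psi^{-1}(\R/\Z\times\D_\delta)$ is the embedded graph of a smooth function $\vartheta_*:\D_\delta\setminus\{0\}\to\R/\Z$: each fiber $\R/\Z\times\{z\}$ meets it in exactly one point.

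Next I would turn this graph structure into a return-time bound inside the tube. Since $g(\vartheta,0)=T/m_0=T_0$, the Reeb vector field equals $\partial_\vartheta/T_0$ on $\{z=0\}$ and is a small perturbation of this for $|z|$ small. Hence an orbit starting at $p=\Psi^{-1}(\vartheta_0,z_0)$ with $0<|z_0|\le\delta$ makes a full $\vartheta$-turn in time close to $T_0$ while $z$ stays nearly constant, and an intermediate-value argument against the single-valued $\vartheta_*$ forces a positive transverse crossing with $u(\C)$. This yields constants $\delta_1\in(0,\delta)$ and $a_1>0$ with the following property: for every $p\in U\setminus x(\R)$, where $U:=\Psi^{-1}(\R/\Z\times\D_{\delta_1})$, there is some $t\in(0,a_1]$ with $\phi^t(p)\in u(\C)$.

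To globalize I would pass to the continuous extension $\overline u:\overline\D\to S^3$ of $u$ coming from the asymptotic limit, whose boundary circle covers $x(\R)$ exactly $m_0$ times. By Lemma~\ref{lemma_fast_symplectisations} and $u(w)\to x(\R)$ as $|w|\to\infty$, $u$ is a proper immersion onto $S^3\setminus x(\R)$ transverse to $X_\lambda$, so $u(\C)$ is closed in $S^3\setminus x(\R)$ and $\overline u(\overline\D)$ represents an $m_0$-fold singular Seifert surface for $x(\R)$. Given $p\in S^3\setminus x(\R)$ and $T\ge0$, closing the Reeb arc $\phi^{[0,T]}(p)$ by an auxiliary path of length bounded independently of $p,T$ to produce a loop $\gamma_T$, the signed count of intersections of $\gamma_T$ with $\overline u(\overline\D)$ equals $m_0\cdot\link(\gamma_T,x(\R))$, and all crossings of the Reeb arc with $u(\C)$ are positive. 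A linear lower bound on $\link(\gamma_T,x(\R))/T$ uniform in $p$, obtained from a Martinet-tube rotation computation near $x(\R)$ parallel to the one in Section~\ref{sec_thm_asymp_cycles} combined with the dynamical convexity hypothesis up to action $T$, then delivers the claimed $\lfloor T/a\rfloor$ bound.

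The main obstacle is precisely this last globalization step: a naive argument trying to show $\bigcup_{s\in[0,a_1]}\phi^{-s}(u(\C))$ is both open and closed in the connected space $S^3\setminus x(\R)$ fails, because this set is only one-sided open at points of $u(\C)$, and a bound on every orbit (not just generic ones) is required. The linking / dynamical-convexity route above is how I propose to extract such a uniform bound, with care needed in tracking intersection signs, the contribution of the closing arc, and the homotopy bookkeeping in the rotation computation.
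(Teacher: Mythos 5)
Your local step near $x(\R)$ is essentially what the paper does for points whose forward orbit accumulates on $x(\R)$ (part of the proof of Lemma~\ref{lemma_bounds_on_return_times}), though be careful that ``$z$ stays nearly constant'' is not quite the point: the transverse coordinate rotates under the linearised flow, and one needs dynamical convexity ($\CZ\geq 3$, hence strictly more than one full turn per period, against the exactly-one-turn spiralling of the end encoded in $\winfty(\util)=1$) to guarantee that nearby orbits actually overtake the surface rather than chase it forever. You also correctly identify that the genuine difficulty is the globalisation, and that an open--closed argument fails.

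However, the globalisation you propose is circular. By your own setup, $m_0\,\link(\gamma_T,x(\R))$ equals, up to the bounded contribution of the closing arc, the positive count of intersections of $\phi^{[0,T]}(p)$ with $u(\C)$; so a uniform linear lower bound on $\link(\gamma_T,x(\R))/T$ is equivalent to the conclusion of the proposition, not a tool for proving it. A ``rotation computation near $x(\R)$'' plus dynamical convexity controls only orbit segments inside the Martinet tube and says nothing about an orbit (for instance another periodic orbit) that never enters a neighbourhood of $x(\R)$ and a priori could avoid $u(\C)$ entirely. In Section~\ref{sec_thm_asymp_cycles} the analogous positivity of linking/intersection numbers is \emph{deduced from} the bounded return time of the disks, not the reverse. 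The missing idea in the paper's proof is to abandon the single plane and use the whole moduli space: automatic transversality makes $\Mfast(P,J)$ a $1$-manifold; SFT compactness combined with dynamical convexity up to action $T$ (Lemma~\ref{lemma_limits_of_fast_planes}, Corollary~\ref{cor_structure_of_tree}, Corollary~\ref{c:fast-planes-S1-family}) produces some $m\leq m_0$ for which $\Mfast(P_0^m,J)$ is non-empty and \emph{compact}; the evaluation map $\ev:\Mfast_1(P_0^m,J)/\R\to S^3$ is then a submersion, proper over $S^3\setminus x(\R)$, hence surjective onto $S^3\setminus x(\R)$. It is this compact family of transverse planes sweeping out the complement of $x(\R)$ that forces every orbit staying away from $x(\R)$ to hit a fixed member of the family in uniformly bounded time; nothing in your proposal substitutes for it.
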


We first show that Proposition~\ref{prop_SFS_input} can be used to check the hypothesis of Theorem~\ref{thm_SFS} for the periodic orbit $x(\R)$ and the cohomology class counting linking numbers with it. Theorem~\ref{main4} follows as a consequence.

\begin{proof}[Proof that Theorem~\ref{main4} follows from Proposition~\ref{prop_SFS_input}]
Let $y\in H^1(S^3\setminus x(\R);\R)$ be the cohomology class that counts linking number of loops in $S^3\setminus x(\R)$ with the loop $t \in \R/\Z \mapsto x(T_0t)=x(Tt/m_0)$. Here we ignore $\Z$-coefficients and work with $\R$-coefficients. If we compactify $\C$ to a disk $D_\C$ by adding a circle at $\infty$ then $u$ induces a capping disk $\bar u:D_\C\to S^3$ for $P$ such that the class in $H^1(S^3\setminus x(\R))$ dual to $\bar u_*[D_\C] \in H_2(S^3,x(\R))$ is precisely~$m_0y$. Here $[D_\C]$ is the fundamental class in $H_2(D_\C,\partial D_\C;\Z)$ induced by the complex orientation. Observe that $u(\C) \setminus x(\R)$ has measure zero with respect to any $\mu\in\Pscr_\phi(S^3\setminus x(\R))$ since it is transverse to the flow. Hence, in view of the discussion in subsection~\ref{ssec_asymp_cycles}, we get a Borel set $E \subset S^3\setminus x(\R)$ such that $\mu(E)=1$ and every point $p\in E$ has the following properties:
\begin{itemize}
\item[(a)] $p$ is recurrent.
\item[(b)] The limits $$ \lim_{n\to+\infty} \frac{\link(k(T_n,p),x(\R))}{T_n} = \lim_{n\to+\infty} \frac{\left<y,k(T_n,p)\right>}{T_n} $$ exist independently of $T_n \to +\infty$ satisfying $\phi^{T_n}(p) \to p$ (and of auxiliary Riemannian metrics), and define a function in $L^1(\mu)$ whose integral is equal to the intersection number $\mu\cdot y$.
\item[(c)] $p\not\in u(\C)$.
\end{itemize}
Hence, using the transversality between $u$ and the Reeb vector field, for every $p \in E$ we can estimate 
\begin{equation}\label{linking_first_est}
\begin{aligned}
m_0 \ \link(k(T_n,p),x(\R)) &= \left<m_0y,k(T_n,p)\right> \\
&=\sum_{t\in[0,T_n], \ \phi^t(p) \in u(\C)} \#\{z\in\C \mid u(z) = \phi^t(p)\} \\
&\geq \#\{t\in[0,T_n] \mid \phi^t(p) \in u(\C)\}
\end{aligned}
\end{equation}
for all $n$ large enough, where $T_n\to+\infty$ satisfies $\phi^{T_n}(p) \to p$. With the aid of Proposition~\ref{prop_SFS_input} we can estimate from~\eqref{linking_first_est}
\begin{equation}
\lim_{n\to+\infty} \frac{\link(k(T_n,p),x(\R))}{T_n} \geq \lim_{n\to+\infty} \frac{1}{m_0T_n} \left\lfloor \frac{T_n}{a} \right\rfloor = \frac{1}{m_0a} \qquad \forall p\in E
\end{equation}
which implies, by definition of intersection numbers, that
\begin{equation}
\mu\cdot y \geq \frac{1}{m_0a}>0 \qquad \forall \mu \in \Pscr_\phi(S^3\setminus x(\R))
\end{equation}
Condition $\rho^y(x(\R)) > 0$ follows immediately from $\CZ(P_0) \geq 3$ where $P_0=(x,T_0)$ is the simply covered periodic orbit underlying $P$. 
Theorem~\ref{main4} now follows from a direct application of Theorem~\ref{thm_SFS} since $y$ is dual to %counts the intersection number of any loop in the complement of $x(\R)$ with 
any Seifert surface for $x(\R)$; here it was used that the ambient space is $S^3$.
\end{proof}

To complete the proof of Theorem~\ref{main4} we need to establish Proposition~\ref{prop_SFS_input}. The rest of this section is concerned with the proof of Proposition~\ref{prop_SFS_input}.

Let us denote by $P_0=(x,T_0)$ the simply covered periodic orbit underlying $P$, and recall that $m_0$ denotes the covering multiplicity of $P=(x,T=m_0T_0)$. For every $k\geq1$ we denote $P_0^k = (x,kT_0)$. In particular $P = P_0^{m_0}$.

Consider the set $\Mfast(P,J)$ of equivalence classes of fast finite-energy planes asymptotic to $P$, where two planes $\util,\vtil$ are equivalent if there exist $A\in\C^*$, $B\in\C$ and $c\in\R$ such that $\vtil_c(z) = \util(Az+B)$ holds for every $z\in\C$. Here $\vtil_c$ denotes the translation of $\vtil$ by $c$ in the $\R$-component. Equivalence classes are denoted by~$[\util]$.

It is possible to build a Fredholm theory for $\Mfast(P,\jbar)$ modelled on sections of the normal bundle, using Sobolev or H\"older spaces. Fix a number $\delta<0$ in the spectral gap of the asymptotic operator associated to $(P,J)$ between eigenvalues with winding number $1$ and $2$ with respect to $\tau$. This is possible since $\CZ_\tau^0(P)\geq 3$. Note that $\alpha^{<\delta}_\tau(P) = 1$ and $\alpha^{\geq\delta}_\tau(P)=2$. Let
\begin{equation}
\util=(a,u):(\C,i) \to (\R\times S^3,\jtil)
\end{equation}
be a fast plane representing an element of $\Mfast(P,J)$. Consider the space of sections of the normal bundle of $\util(\C)$ with exponential decay faster than $\delta$. The Fredholm index of the linearisation $D_{\util}$ of the Cauchy-Riemann equations at $\util$ restricted to this space of sections is
\begin{equation}
{\rm ind}_\delta(\util) = \CZ^\delta_\tau(P) - 1 = 3-1 = 2.
\end{equation}

An important fact is that \textit{automatic transversality} holds, i.e. $D_{\util}$ at a fast plane~$\util$ is always a surjective Fredholm operator. Let us prove this fact. There is no loss of generality to deform the normal bundle so that it coincides with $u^*\xi_0$ over $\C\setminus B_R(0)$, $R\gg1$. A $d\lambda$-symplectic trivialising frame of the normal bundle induces, up to homotopy, a $d\lambda$-symplectic trivialisation $\sigma_N$ of $x(T\cdot)^*\xi_0$ which winds $+1$ with respect to the global frame $\tau$. Moreover, a nontrivial section $\zeta\in\ker D_{\util}$ admits an asymptotic behaviour governed by an eigensection of the asymptotic operator associated to an eigenvalue $\nu<\delta$, see~\cite[Theorem~6.1]{fast} or~\cite[Theorem~A.1]{sie_CPAM}. Hence, $\zeta$ does not vanish near $\infty$ and the total algebraic count of zeros of $\zeta$ is equal to the winding number of $\nu$ with respect to $\sigma_N$, which is equal to $$ \wind_\tau(\nu)-1 \leq \alpha^{<\delta}_\tau(P)-1=1-1=0. $$ But the equation $D_{\util}\zeta=0$ allows us to use Carleman's similarity principle to say that zeros are isolated and count positively. The important conclusion is that $\zeta$ never vanishes. Since the Fredholm index is $2$, we would find $3$ linearly independent sections of the kernel if $D_{\util}$ were not surjective. But the normal bundle is two-dimensional, hence a nontrivial linear combination of them would have to vanish at some point, contradiction.

\begin{remark}
Arguments like the one used above to prove automatic transversality statements were explored in~\cite{HLS,Wendl}, see also~\cite{tese}.
\end{remark}

It follows from the above discussed automatic transversality that $\Mfast(P,J)$ can be given the structure of a smooth, Hausdorff and second countable one dimensional manifold.

\begin{remark}
Under our standing assumption that $\lambda$ is non-degenerate up to action~$T$ one can show that the topology on $\Mfast(P,J)$ inherited from the functional analytic set-up used for the Fredholm theory coincides with the topology of $C^\infty_{\rm loc}$-convergence. There are situations where this can also be proved dropping non-degeneracy~\cite{hryn_jsg,elliptic}.
\end{remark}

Consider a sequence $\util_n:(\C,i) \to (\R\times S^3,\jtil)$ of fast finite-energy planes asymptotic to $P$. Since $\lambda$ is assumed to be nondegenerate up to action $T$ we can apply the SFT compactness theorem to get, up to selection of a subsequence, that $\util_n$ SFT-converges to a stable holomorphic building~$\mathbf{u}$. Since~${\bf u}$ is a limit of planes it can be conveniently described as a directed, rooted tree~$\mathcal{T}$. Each vertex $v$ corresponds to a finite-energy map
\begin{equation*}
\util_v = (a_v,u_v) : (\C\setminus\Gamma_v,i) \to (\R\times S^3,\jtil)
\end{equation*}
with a unique positive puncture $\infty$. The finite set $\Gamma_v$ consists of the negative punctures of $\util_v$. The top level of this building corresponds to the root $r$, and consists of a single finite-energy map $\util_r$ which is asymptotic to $P$ at its positive puncture $\infty$. Edges are always assumed oriented as going away from the root. An edge $e$ from the vertex $v$ to the vertex $v'$ corresponds to a negative puncture of $\util_{v}$. The asymptotic limit $\util_{v}$ at the negative puncture corresponding to $e$ is equal to the asymptotic limit of $\util_{v'}$ at its positive puncture. The leaves correspond precisely to the vertices $v$ such that $\util_v$ is a plane ($\Gamma_v = \emptyset$).

%The top level of this building must be represented by a finite-energy map
%\begin{equation}
%\util = (a,u) : \C\setminus\Gamma \to \R\times S^3
%\end{equation}
%with one positive puncture at $\infty$ where it is asymptotic to $P$. The set $\Gamma$ consists of negative punctures. Moreover, $\int u^*d\lambda>0$, i.e. $\util$ is not a (possibly branched) cover of a trivial cylinder; otherwise stability of $\mathbf{u}$ is violated.

\begin{lemma}\label{lemma_limits_of_fast_planes}
If $v$ is a vertex of $\mathcal{T}$ such that $\int u_v^*d\lambda > 0$ then $\wind_\infty(\util_v,\infty,\tau)\leq 1$.
\end{lemma}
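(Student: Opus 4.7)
The plan is to combine the HWZ winding identity~\eqref{identity_winds} applied at each vertex of $\mathcal{T}$ with a neck analysis at each internal edge and with the fastness of $\util_n$.

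First, applying~\eqref{identity_winds} to $\util_v$, whose domain $\C\setminus\Gamma_v$ has Euler characteristic $1-|\Gamma_v|$ and $1+|\Gamma_v|$ punctures, rearranges to
\begin{equation*}
\wind_\infty(\util_v,\infty,\tau) = \wind_\pi(\util_v) + \sum_{z\in\Gamma_v}\wind_\infty(\util_v,z,\tau) - 2|\Gamma_v|.
\end{equation*}
Stokes' theorem bounds the periods of all orbits appearing in the building by $T$, so dynamical convexity gives $\wind_\infty(\util_v,z,\tau) \geq \alpha^{\geq 0}_\tau(P_z) \geq 2$ at each negative puncture. Combined with $\wind_\pi(\util_v) \geq 0$, which follows from the hypothesis $\int u_v^*d\lambda>0$, the identity immediately delivers the lower bound $\wind_\infty(\util_v,\infty,\tau) \geq 0$.

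Next, for the upper bound I will exploit the SFT convergence. At each internal edge $e$ joining a parent $v$ to a child $v'$ with matching orbit $P_{v'}$, write $u_n$ in a Martinet tube of $P_{v'}$ as $(\vartheta(s,t),z(s,t))$ on the corresponding neck. Zeros of $z$ are intersections of $\util_n$ with the trivial cylinder $\R\times P_{v'}$ and count positively by pseudo-holomorphicity in the symplectic four-manifold $\R\times S^3$. The winding-change formula then yields
\begin{equation*}
\wind_\infty(\util_v,z_e,\tau) - \wind_\infty(\util_{v'},\infty,\tau) = I_e \geq 0.
\end{equation*}
Summing the identity over all vertices $v$ of $\mathcal{T}$ and telescoping via these matching equations produces
\begin{equation*}
\wind_\infty(\util_r,\infty,\tau) = \sum_v\wind_\pi(\util_v) + I - 2(|V|-1),
\end{equation*}
where $|V|$ is the number of vertices and $I=\sum_e I_e$.

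Finally, fastness of $\util_n$, combined with~\eqref{identity_winds} applied to the plane $\util_n$, gives $\wind_\pi(\util_n)=1$. Invoking a conservation law $\wind_\pi(\util_n)=\sum_v\wind_\pi(\util_v)+I$ under SFT convergence---every zero of $\pi_\lambda\circ du_n$ either persists at some vertex $\util_v$ or appears in a neck where the leading two-mode asymptotic model makes critical points of $z$ and zeros of $z$ appear in equal numbers---simplifies the telescoped identity to $\wind_\infty(\util_r,\infty,\tau)=1-2(|V|-1)$. The lower bound from the second paragraph then forces $|V|=1$, so the unique vertex is $v=r$ with $\wind_\infty(\util_r,\infty,\tau)=1$, establishing the claim.

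The main obstacle is rigorously justifying the conservation law $\wind_\pi(\util_n)=\sum_v\wind_\pi(\util_v)+I$, which requires combining Theorem~\ref{thm_asymptotics} with a compactness argument in the stretched necks to identify the leading two-mode asymptotic model in which zeros of $z$ and critical points of $z$ coincide in count.
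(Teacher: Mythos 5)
There is a genuine gap, and it sits exactly where you flag it: the ``conservation law'' $\wind_\pi(\util_n)=\sum_v\wind_\pi(\util_v)+I$ is not established, and it cannot be set up in the form you propose. Your telescoping requires applying \eqref{identity_winds} at every vertex, but that identity is only available when $\int u_v^*d\lambda>0$; the lemma immediately following this one shows that every non-leaf vertex is a (possibly branched) cover of a trivial cylinder, where $\pi_\lambda\circ du_v\equiv 0$ and $\wind_\pi(\util_v)$ is simply undefined. There are also two concrete errors in the bookkeeping. First, in \eqref{identity_winds} the symbol $\chi$ is the Euler characteristic of the \emph{closed} surface (here $\chi=2$), as the paper's own use of the identity in the next lemma makes clear ($\wind_\pi(\util_v)=\wind_\infty(\util_v)-1+\#\Gamma_v$); you used $\chi(\C\setminus\Gamma_v)=1-|\Gamma_v|$. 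In particular a fast plane has $\wind_\pi(\util_n)=\wind_\infty(\util_n)-1=0$, not $1$, consistent with Lemma~\ref{lemma_fast_symplectisations} (immersion transverse to $X_\lambda$). Second, your conclusion $|V|=1$ is false and would contradict the rest of the paper: the proof of Corollary~\ref{c:fast-planes-S1-family} explicitly works with limiting buildings having more than one level. Any argument that forces the building to be a single fast plane proves too much.

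The paper's proof is much more local and avoids all of this. By SFT convergence one finds reparametrisations $\wtil_n$ of $\util_n$ converging to $\util_v$ in $C^\infty_{\rm loc}$. Choose $s_1$ large enough that $\pi_\lambda(\partial_s u_v)$ is nonvanishing on $[s_1,\infty)\times\R/\Z$ and its winding on $\{s_1\}\times\R/\Z$ equals $\wind_\infty(\util_v,\infty,\tau)$; by $C^\infty_{\rm loc}$ convergence the same winding is realised by $\pi_\lambda(\partial_s w_n)$ on that circle for $n$ large. Since zeros of $\pi_\lambda\circ dw_n$ are isolated and count positively (Carleman similarity principle), the winding on $\{s,\cdot\}$ is nondecreasing in $s$, and its limit as $s\to+\infty$ is $\wind_\infty(\util_n)=1$. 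This gives $\wind_\infty(\util_v,\infty,\tau)\le 1$ directly, with no neck analysis, no matching conditions at internal edges, and no information needed about the other vertices. If you want to salvage your global approach, you would need an intersection-theoretic compactness statement in the spirit of \cite{sief_int}, which is far heavier machinery than the lemma requires.
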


\begin{proof}
SFT compactness allows us to find $A_n \in \C^*$, $B_n \in \C$ and $c_n \in \R$ such that the planes $\wtil_n(z) = c_n \cdot \util_n(A_nz+B_n)$ converge to $\util_v$ in $C^\infty_{\rm loc}(\C\setminus\Gamma_v)$. Here $c_n \cdot \util_n$ denotes the translation by $c_n$ in the $\R$-component.

Consider components $\wtil_n=(d_n,w_n)$ and $\util_v=(a_v,u_v)$ in $\R\times S^3$. Write $\wtil_n(s,t)=(d_n(s,t),w_n(s,t))$ instead of $\wtil_n(e^{2\pi(s+it)})$, and similarly $\util_v(s,t)=(a_v(s,t),u_v(s,t))$. Fix $s_0$ such that $z\in\Gamma_v \Rightarrow |z| < e^{2\pi s_0}$. 
%Let $\mathcal{N}$ be a small tubular neighbourhood of $x(\R)$ in $S^3$. 
By Theorem~\ref{thm_asymptotics} we can find $s_1>s_0$ such that $\pi_\lambda(\partial_su_v)$ does not vanish on $[s_1,+\infty)\times\R/\Z$ and the winding number $\wind(\pi_\lambda(\partial_su_v)(s_1,\cdot))$ of $t\mapsto \pi_\lambda(\partial_su_v)(s_1,t)$ in the global frame $\tau$ is equal to $\wind_\infty(\util_v,\infty,\tau)$. Since $\pi_\lambda(\partial_sw_n) \to \pi_\lambda(\partial_su_v)$ in $C^\infty_{\rm loc}$ we find $n_0$ such that if $n\geq n_0$ then $\pi_\lambda(\partial_sw_n)$ does not vanish on $\{s_1\}\times\R/\Z$ and 
$$ 
\wind(\pi_\lambda(\partial_sw_n)(s_1,\cdot)) = \wind(\pi_\lambda(\partial_su_v)(s_1,\cdot)) = \wind_\infty(\util_v,\infty,\tau). 
$$ 
The frame $\tau$ can be used to represent the maps $(s,t) \mapsto \pi_\lambda(\partial_sw_n)$ by smooth maps $\zeta_n : [s_0,+\infty) \times\R/\Z \to \C$ satisfying a Cauchy-Riemann type equation. Carleman's similarity principle implies that either $\zeta_n$ vanishes identically on $[s_0,+\infty)\times\R/\Z$, or its zeros are isolated and count positively. It can not vanish identically since the $\wtil_n$ are planes. By Theorem~\ref{thm_asymptotics} $\zeta_n(s,t)$ does not vanish when $s$ is large enough and for every $n$ we have $$ \lim_{s\to+\infty} \wind(\zeta_n(s,\cdot)) = \lim_{s\to+\infty} \wind(\pi_\lambda(\partial_sw_n)(s,\cdot)) = \wind_\infty(\wtil_n) = \wind_\infty(\util_n) $$ If $s>s_1$ is large enough then $\wind(\zeta_n(s,\cdot)) - \wind(\zeta_n(s_1,\cdot))$ is the algebraic count of zeros of $\zeta_n$ on $[s_1,s]\times\R/\Z$. Since this count is nonnegative we get $$ \wind(\pi_\lambda(\partial_sw_n)(s_1,\cdot)) \leq \wind_\infty(\util_n) $$ for all $n\geq n_0$. Hence 
$$ 
n\geq n_0 \Rightarrow \wind_\infty(\util_v,\infty,\tau) \leq \wind_\infty(\util_n) = 1
$$ 
as desired. %The conclusion follows from this inequality together with~\eqref{def_wind_pi} and~\eqref{identity_winds}.
\end{proof}

\begin{lemma}
If the vertex $v$ is not a leaf then $\int u_v^*d\lambda=0$, i.e. $\util_v$ is a possibly branched cover of a trivial cylinder over a periodic orbit.
\end{lemma}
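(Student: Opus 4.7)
The plan is to argue by contradiction: assume that $v$ is not a leaf, so $k := \#\Gamma_v \geq 1$, and that $\int u_v^* d\lambda > 0$, and derive a contradiction using the winding identity~\eqref{identity_winds}. A preliminary observation is that every asymptotic limit appearing anywhere in the building $\mathbf{u}$ is a periodic orbit of action at most $T$. This follows by induction along $\mathcal{T}$, starting at the root whose positive asymptotic limit is $P$ of action $T$: Stokes' theorem at any vertex $w$ gives $\int u_w^* d\lambda$ equal to the action of the positive asymptotic limit minus the sum of the actions of the negative ones, and since $\int u_w^* d\lambda \geq 0$ (from $\jtil$-holomorphicity), the sum of negative asymptotic actions is at most the positive one. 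Hence nondegeneracy and dynamical convexity up to action $T$ apply to every asymptotic limit of every $\util_w$.

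Under the contradictory hypothesis, Lemma~\ref{lemma_limits_of_fast_planes} yields $\wind_\infty(\util_v, \infty, \tau) \leq 1$. For each negative puncture $z \in \Gamma_v$ with asymptotic limit $Q_z$, Theorem~\ref{thm_asymptotics} forces the asymptotic eigenvalue $\nu_z$ to satisfy $\nu_z > 0$, and nondegeneracy excludes $\nu_z = 0$. Hence $\wind_\infty(\util_v, z, \tau) = \wind_\tau(\nu_z) \geq \alpha_\tau^{\geq 0}(Q_z)$. On $S^3$ the global $d\lambda$-symplectic trivialisation $\tau$ is homotopic to $\sigma_{\mathrm{disk}}$ along any loop (both extend over simply connected regions and $c_1(\xi_0) = 0$), so dynamical convexity gives $\CZ_\tau^0(Q_z) \geq 3$; the identity $\CZ_\tau^0 = 2\alpha_\tau^{<0} + p^0$ then forces $\alpha_\tau^{\geq 0}(Q_z) \geq 2$ in both possible cases ($\alpha_\tau^{<0} \geq 2$, or $\alpha_\tau^{<0} = 1$ with $p^0 = 1$). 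Summing,
\[
\wind_\infty(\util_v) = \wind_\infty(\util_v, \infty, \tau) - \sum_{z \in \Gamma_v} \wind_\infty(\util_v, z, \tau) \leq 1 - 2k,
\]
and identity~\eqref{identity_winds} with $\chi(S^2) = 2$ and $k+1$ punctures gives $\wind_\pi(\util_v) \leq 1 - 2k - 2 + (k+1) = -k \leq -1$, contradicting $\wind_\pi(\util_v) \geq 0$.

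Therefore $\int u_v^* d\lambda = 0$. Because the integrand is pointwise nonnegative (from $\jtil$-holomorphicity), this forces $\pi_\lambda \circ du_v \equiv 0$, so $u_v$ takes values in a single Reeb trajectory and $\util_v$ is a branched cover of the trivial cylinder over the corresponding periodic orbit. The step I expect to be the main obstacle is the winding lower bound $\alpha_\tau^{\geq 0}(Q_z) \geq 2$ at each negative puncture; this is what converts dynamical convexity into the needed arithmetic inequality, and it relies on three ingredients that must line up correctly — strict positivity of the asymptotic eigenvalue at negative punctures via Theorem~\ref{thm_asymptotics} together with nondegeneracy, the homotopy equivalence of $\tau$ and $\sigma_{\mathrm{disk}}$ on $S^3$, and the index identity $\CZ^0 = 2\alpha^{<0} + p^0$.
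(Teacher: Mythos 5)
Your proof is correct and follows essentially the same route as the paper's: assume $\int u_v^*d\lambda>0$, bound $\wind_\infty$ at the positive puncture by $1$ via Lemma~\ref{lemma_limits_of_fast_planes} and at each negative puncture from below by $2$ via dynamical convexity, and derive $0\leq\wind_\pi(\util_v)\leq-\#\Gamma_v$ from~\eqref{identity_winds}. You merely make explicit some steps the paper leaves implicit (the Stokes-theorem action bound along the tree and the deduction of $\alpha_\tau^{\geq 0}\geq 2$ from $\CZ^0_\tau\geq 3$), which is fine.
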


\begin{proof}
Suppose that $\int u_v^*d\lambda>0$. At the negative punctures $z\in\Gamma_v$ of $\util_v$ we have $\wind_\infty(\util_v,z,\tau) \geq 2$ since the asymptotic limits at these punctures are periodic Reeb orbits with action less than $T$ and hence, by assumption, satisfy $\CZ^0_\tau\geq3$. By the previous lemma together with~\eqref{def_wind_pi} and~\eqref{identity_winds} we arrive at
\begin{equation}
\begin{aligned}
0 &\leq \wind_\pi(\util_v) = \wind_\infty(\util_v) - 1 + \#\Gamma_v \leq 1 - 2\#\Gamma_v -1 + \#\Gamma_v = -\#\Gamma_v
\end{aligned}
\end{equation}
Thus $\Gamma_v=\emptyset$ and $v$ is a leaf.
\end{proof}

\begin{corollary}\label{cor_structure_of_tree}
The following dichotomy holds for every vertex $v$ of $\mathcal{T}$:
\begin{itemize}
\item[(i)] $v$ is not a leaf, $\int u_v^*d\lambda=0$ and $\util_v$ is a (possibly branched) cover of a trivial cylinder.
\item[(ii)] $v$ is a leaf, $\int u_v^*d\lambda>0$ and $\util_v$ is a fast plane asymptotic to a covering of~$P_0$.
\end{itemize}
\end{corollary}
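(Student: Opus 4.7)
The plan is to read off the dichotomy as a synthesis of the two preceding lemmas, then to upgrade the ``leaf'' case into a fast plane asymptotic to a covering of $P_0$. Let $v$ be an arbitrary vertex of $\mathcal{T}$.

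If $v$ is not a leaf, then the previous lemma already gives $\int u_v^*d\lambda=0$. From the standard principle that a finite-energy $\jtil$-holomorphic curve in the symplectisation with vanishing $d\lambda$-area must factor through a trivial cylinder (the cylinder over its positive asymptotic limit) via a holomorphic, possibly branched cover, we conclude that $\util_v$ is exactly of the form in (i). So (i) holds whenever $v$ is not a leaf.

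If $v$ is a leaf, then $\Gamma_v=\emptyset$, so $\util_v$ is a plane. The same factoring principle rules out $\int u_v^*d\lambda=0$: a plane cannot holomorphically cover $\R\times S^1$ for topological reasons. Hence $\int u_v^*d\lambda>0$. To identify the asymptotic limit at $\infty$ as a covering of $P_0$, I propagate along $\mathcal{T}$: the root $\util_r$ is asymptotic to $P=P_0^{m_0}$, which is an iterate of $P_0$; if $r$ is a branch vertex, then by (i) it covers the trivial cylinder over the orbit underlying $P$, so the orbit itself is an iterate of $P_0$ and all negative asymptotic limits are iterates of $P_0$. Descending inductively along the tree, every edge---and in particular the positive puncture of each leaf---carries an asymptotic limit that is a covering of $P_0$.

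It remains to show the leaf plane $\util_v$ is fast. Lemma~\ref{lemma_limits_of_fast_planes} gives $\wind_\infty(\util_v,\infty,\tau)\leq 1$. For the reverse inequality, apply the identity \eqref{identity_winds} to $\util_v$, viewed as having domain $\overline{\C}$ with one puncture: this gives $\wind_\pi(\util_v)=\wind_\infty(\util_v)-2+1=\wind_\infty(\util_v)-1$, and since $\int u_v^*d\lambda>0$ the quantity $\wind_\pi(\util_v)$ is well-defined and nonnegative, forcing $\wind_\infty(\util_v)\geq 1$. Combining the two inequalities yields $\wind_\infty(\util_v)=1$, which is Definition~\ref{def_fast_plane}. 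The only non-automatic step is the propagation argument that the base orbit stays an iterate of $P_0$ all the way down $\mathcal{T}$; the rest is a direct read-off of the two preceding lemmas together with standard SFT facts about trivial-cylinder covers and the winding identity.
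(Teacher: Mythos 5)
Your proof is correct and follows essentially the same route as the paper: case (i) is the preceding lemma plus the standard factoring principle for zero $d\lambda$-area curves, and case (ii) combines Lemma~\ref{lemma_limits_of_fast_planes} with the identity \eqref{identity_winds} and $\wind_\pi\geq 0$ to pin down $\wind_\infty(\util_v)=1$. You additionally spell out two points the paper leaves implicit --- that a leaf plane has $\int u_v^*d\lambda>0$ (the paper's preliminaries already record this via the similarity principle, but your topological argument that a plane cannot cover a trivial cylinder also works) and the inductive propagation down $\mathcal{T}$ showing every asymptotic limit is an iterate of $P_0$ --- both of which are accurate.
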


\begin{proof}
Case (i) is handled by the previous lemma. Let us now argue for (ii). By Lemma~\ref{lemma_limits_of_fast_planes} if $v$ is a leaf then it is a plane satisfying $\wind_\infty(\util_v)\leq1$. Hence $0\leq\wind_\pi(\util_v) = \wind_\infty(\util_v)-1 \leq1-1=0$, i.e. $\wind_\infty(\util_v)=1$ and $\util_v$ is a fast plane.
\end{proof}

For every $1\leq k\leq m_0$ we consider $\Mfast(P_0^k,J)$ the moduli space of fast finite-energy planes asymptotic to $P_0^k$, defined as before. For each $k$ there is a suitable choice of negative weight placed precisely at the spectral gap between eigenvalues of the asymptotic operator associated to $(P_0^k,J)$ with winding $1$ and $2$ in a global frame. With these weights one builds a Fredholm theory as before, and there is automatic transversality. The space $\Mfast(P_0^k,J)$ becomes a $1$-dimensional smooth, second countable Hausdorff manifold. Moreover, the induced topology coincides with the topology induced by $C^\infty_{\rm loc}$-convergence.

\begin{corollary}\label{c:fast-planes-S1-family}
There exists $m \in \{1,\dots,m_0\}$ such that $\Mfast(P_0^m,J)$ is non-empty and compact.
\end{corollary}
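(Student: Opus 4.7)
The plan is to let $m$ be the smallest element of $S := \{k \in \{1,\ldots,m_0\} : \Mfast(P_0^k,J) \neq \emptyset\}$, and to show that $\Mfast(P_0^m,J)$ is compact. The set $S$ is non-empty because $m_0 \in S$ by the hypothesis of Theorem~\ref{main4} (which supplies a fast plane asymptotic to $P=P_0^{m_0}$), so $m$ is well-defined, and $\Mfast(P_0^m,J)$ is non-empty by construction.

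For compactness I would take any sequence $[\util_n] \in \Mfast(P_0^m,J)$ and apply SFT compactness (using the standing non-degeneracy up to action $T \geq m T_0$) to pass to a subsequence SFT-converging to a stable holomorphic building $\mathbf{u}$ encoded by a rooted tree $\mathcal{T}$, exactly as set up in the excerpt just before Corollary~\ref{cor_structure_of_tree}. By that corollary every non-leaf vertex $v$ satisfies $\int u_v^* d\lambda = 0$ and is a (possibly branched) cover of a trivial cylinder, while every leaf $\ell$ is a fast plane asymptotic to some cover $P_0^{n_\ell}$ of $P_0$. Stokes' theorem applied at each non-leaf vertex shows that the covering multiplicity at its positive puncture equals the sum of the multiplicities at its negative punctures; an induction on the depth of $\mathcal{T}$ then yields $\sum_\ell n_\ell = m$.

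The case analysis closes the argument. If there are $p \geq 2$ leaves, then each $n_\ell < m$, and some leaf produces an element of $\Mfast(P_0^{n_\ell},J)$ with $n_\ell < m$, contradicting the minimality of $m$. If $p=1$ and $\mathcal{T}$ has depth $\geq 1$, then every intermediate vertex has exactly one negative puncture; Riemann--Hurwitz applied to such a cylinder-domain cover of the trivial cylinder forces it to be an unbranched $m$-fold cover. These trivial-cylinder levels are absorbed into reparametrizations: after adjusting the $\util_n$ by the $\C^* \ltimes \C$-action on the domain and by $\R$-translations in the target, the sequence converges in $C^\infty_{\rm loc}$ to the unique leaf, and the remark immediately preceding the statement of Corollary~\ref{c:fast-planes-S1-family} identifies this with convergence in $\Mfast(P_0^m,J)$. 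The degenerate case of a tree consisting of a single leaf is handled identically and gives convergence directly. In every scenario we obtain either a contradiction with $m = \min S$ or a convergent subsequence, which proves compactness.

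The main obstacle I anticipate lies in the $p=1$ case: one must rigorously justify that a chain of unbranched trivial-cylinder covers appearing as intermediate SFT levels genuinely collapses, under reparametrization, to ordinary $C^\infty_{\rm loc}$-convergence of representatives, rather than representing honest breaking. This rests both on the non-degeneracy hypothesis and on the coincidence of the Fredholm and $C^\infty_{\rm loc}$ topologies on $\Mfast(P_0^m,J)$ noted in the preceding remark.
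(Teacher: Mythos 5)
Your proposal is correct and follows essentially the same route as the paper: SFT compactness, the structure result of Corollary~\ref{cor_structure_of_tree}, and descent on the covering multiplicity (you take the minimal $m$ upfront, the paper descends iteratively from $m_0$ -- the same idea). The only real difference is your $p=1$ case, which the paper disposes of in one line: a level consisting of a single unbranched cover of a trivial cylinder violates the stability of the limiting building, so that configuration simply does not occur and the ``collapsing'' you flag as the main obstacle never needs to be justified.
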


\begin{proof}
We work under the assumption of Theorem~\ref{main4} that $\Mfast(P=P_0^{m_0},J)$ is non-empty. If $\Mfast(P,J)$ is compact then there is nothing to be proved. If $\Mfast(P,J)$ is not compact then some sequence in $\Mfast(P,J)$, represented by fast planes $\util_n$, will SFT-converge to a building ${\bf u}$ with more than one level. This means that the corresponding tree $\mathcal{T}$ does not consist of a single vertex (the root), and by Corollary~\ref{cor_structure_of_tree} every leaf $v$ must be a fast finite-energy plane asymptotic to $P_0^{k_v}$, for some $k_v \in \{1,\dots,m_0-1\}$. The reason for the strict inequality $k_v<m_0$ is that the root must have at least two negative punctures: otherwise the root corresponds to a trivial cylinder, which is ruled out by stability of the limiting building. Pick any leaf~$v$, denote $m_1 = k_v$. Hence the moduli space $\Mfast(P_0^{m_1},J)$ of fast planes asymptotic to $P_0^{m_1}$ is not empty. If $\Mfast(P_0^{m_1},J)$ is compact then we are done with the proof. If not we proceed just as above to find $1\leq m_2\leq m_1-1$ such that $\Mfast(P_0^{m_2},J)$ is non empty. After a finite number of steps $k\geq0$ this process stops and we find $1\leq m_k \leq m_0$ such that $\Mfast(P_0^{m_k},J)$ is non-empty and compact.
\end{proof}

%\begin{lemma}
%The building $\mathbf{u}$ has one level, i.e. the tree $\mathcal{T}$ consists precisely of the root $r$, and $\util_r$ is a fast plane asymptotic to $P$.
%\end{lemma}
%
%\begin{proof}
%If ${\bf u}$ has more than one level then $r$ is not a leaf, so the previous lemma implies that $\int u_r^*d\lambda=0$. But this implies that $\util_r$ is a trivial cylinder because $P$ is simply covered by assumption, contradicting the stability of the building ${\bf u}$. Hence $r$ is the only vertex, and Lemma~\ref{lemma_limits_of_fast_planes} implies that it is a fast plane asymptotic to~$P$.
%\end{proof}

From now on $m$ is given by the previous lemma, that is, $\Mfast(P_0^m,J)$ is a non-empty, compact, smooth and Hausdorff $1$-dimensional manifold, i.e. a finite collection of circles. %Note that the $(\R,+)$ action on $\Mfast(P_0^m,J)$ is smooth and free. 

%This fact and the previous discussion imply that following statement. 
%
%
%\begin{proposition}\label{prop_compactness}
%If $P=(x,T)$ is a periodic Reeb orbit of a contact form $\lambda$ on $(S^3,\xi_0)$ which is both nondegenerate and dynamically convex up to action $T$, then $\Mfast(P,J)/\R$ is a compact smooth one dimensional manifold.
%\end{proposition}

Consider the space $\Mfast_1(P_0^m,J)$ of equivalence classes of pairs $(\util,z)$ where $\util$ is a fast plane asymptotic to $P_0^m$ and $z\in\C$. Two pairs $(\util_0,z_0)$, $(\util_1,z_1)$ are equivalent if there exist $A\in\C^*$, $B\in\C$ such that $\util_1(Az+B) = \util_0(z)$ for all $z\in\C$ and $z_1=Az_0+B$. Note that $(\R,+)$ acts freely on $\Mfast_1(P_0^m,J)$ by translations in the symplectisation direction. Hence $\Mfast_1(P_0^m,J)/\R$ is a smooth three-dimensional manifold. The map
\begin{equation}\label{ev_map}
\ev:\Mfast_1(P_0^m,J)/\R \to S^3 \qquad\qquad \ev([\util=(a,u),z]/\R) \mapsto u(z)
\end{equation}
is smooth.

\begin{lemma}\label{lemma_ev_submersion}
The map $\ev$ is a submersion.
\end{lemma}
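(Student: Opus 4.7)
Since both $\Mfast_1(P_0^m,J)/\R$ and $S^3$ are three-dimensional smooth manifolds, it suffices to show that $d\ev$ is injective at every class $[\util=(a,u),z]/\R$. The plan is to work in the Fredholm setup on the normal bundle $N_\util = \util^*T(\R\times S^3)/d\util(T\C)$ already used to prove automatic transversality: $D_\util$ has two-dimensional kernel, and a canonical element $\eta_0 \in \ker D_\util$ arises from the $\R$-translation vector field $\partial_a$. Since by Lemma~\ref{lemma_fast_symplectisations} the map $u$ is an immersion transverse to $X_\lambda$, no nonzero vector in $\langle\partial_a, X_\lambda\rangle$ is tangent to $\util(\C)$; the inclusion $\langle\partial_a,X_\lambda\rangle \hookrightarrow \util^*T(\R\times S^3)$ therefore descends to a global trivialization of $N_\util$ over $\C$, in which $\eta_0$ is the constant section $(1,0)$.

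In this setup, the tangent space to $\Mfast_1(P_0^m,J)/\R$ at $[\util,z]/\R$ splits as $T_z\C \oplus (\ker D_\util/\R\eta_0)$, and a direct computation gives
\begin{equation*}
d\ev(\delta z,[\eta]) \;=\; du(z)\,\delta z + \pi_{TS^3}(\eta(z)) \;\in\; T_{u(z)}S^3 = du(T_z\C)\oplus \R X_\lambda(u(z)).
\end{equation*}
Writing $\eta(z) = A(z)\partial_a + B(z)X_\lambda$ in the normal trivialization, the $X_\lambda$-component of $d\ev(\delta z,[\eta])$ equals $B(z)$ (because $\pi_{TS^3}(\partial_a)=0$ and $du(z)\,\delta z \in du(T_z\C)$), while vanishing of the $du(T_z\C)$-component pins down $\delta z$ uniquely given $\eta$. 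Hence injectivity of $d\ev$ reduces to the claim that $B(z)\ne 0$ whenever $[\eta]\ne 0$ in $\ker D_\util/\R\eta_0$.

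Suppose for contradiction that $B(z)=0$. Then $\eta(z) = A(z)\eta_0(z)$ in $N_\util|_z$, so $\zeta := \eta - A(z)\eta_0 \in \ker D_\util$ vanishes at $z$. I would then repeat the automatic transversality argument already given earlier in the paper: by \cite[Theorem~6.1]{fast} and \cite[Theorem~A.1]{sie_CPAM} a nontrivial $\zeta$ has asymptotic eigenvalue $\nu<\delta$; the trivialization $\sigma_N$ of $N_\util$ winds $+1$ with respect to $\tau$, so the algebraic count of zeros of $\zeta$ equals $\wind_{\sigma_N}(\nu) = \wind_\tau(\nu)-1 \le \alpha^{<\delta}_\tau(P_0^m) - 1 = 0$; by Carleman's similarity principle the zeros are isolated and positively counted, so $\zeta$ has none, contradicting $\zeta(z)=0$. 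Thus $\zeta \equiv 0$, i.e.\ $\eta = A(z)\eta_0 \in \R\eta_0$, contradicting $[\eta]\ne 0$. The only technical point that needs separate verification is that $\eta_0$ genuinely lies in the $\delta$-weighted $\ker D_\util$; this follows from Theorem~\ref{thm_asymptotics}, which shows $d\util(T\C)$ approaches $\langle\partial_a,X_\lambda\rangle$ at the rate $e^{\nu_\util s}$ with $\nu_\util<\delta$, so the section of $N_\util$ induced by $\partial_a$ decays at least this fast at the puncture.
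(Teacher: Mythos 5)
Your proof is correct and takes essentially the same route as the paper, whose entire argument is the one-line observation that nontrivial weighted kernel sections of $D_{\util}$ never vanish and that $u$ is an immersion. You have simply supplied the details that the paper leaves implicit: the splitting of the tangent space as $T_z\C\oplus(\ker D_{\util}/\R\eta_0)$, the formula for $d\ev$, and the reduction of injectivity to the nonvanishing statement already established in the automatic transversality argument.
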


\begin{proof}
For every $\util \in \Mfast(P_0^m,J)$ nontrivial sections in the kernel of the linearised Cauchy-Riemann operator at $\util$, with the appropriate weighted exponential decay, which represent elements in the tangent space, never vanish and $u$ is an immersion.
\end{proof}

\begin{lemma}\label{lemma_ev_relatively_proper}
If $K \subset S^3\setminus x(\R)$ is compact then $\ev^{-1}(K)$ is compact.
\end{lemma}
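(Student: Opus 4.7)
I will prove sequential compactness of $\ev^{-1}(K)$. Consider a sequence $\{[(\util_n,z_n)]/\R\}_{n\ge1}$ in $\ev^{-1}(K)$, with representatives $\util_n=(a_n,u_n)$ fast planes asymptotic to $P_0^m$ such that $u_n(z_n)\in K$. Since $\Mfast(P_0^m,J)$ is compact by Corollary~\ref{c:fast-planes-S1-family}, and its topology agrees with $C^\infty_{\rm loc}$-convergence of representatives (by the remark preceding that corollary), after passing to a subsequence we can find $A_n\in\C^*$, $B_n\in\C$ and $c_n\in\R$ such that the reparametrized planes $\vtil_n(z):=c_n\cdot\util_n(A_nz+B_n)=(a_n(A_nz+B_n)+c_n,\,u_n(A_nz+B_n))$ converge in $C^\infty_{\rm loc}(\C,\R\times S^3)$ to an honest representative $\util_\infty=(a_\infty,u_\infty)$ of an element of $\Mfast(P_0^m,J)$. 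Setting $w_n:=A_n^{-1}(z_n-B_n)$, we have $[(\vtil_n,w_n)]/\R=[(\util_n,z_n)]/\R$ in $\Mfast_1(P_0^m,J)/\R$ and $v_n(w_n)=u_n(z_n)\in K$.

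The crux is to show that $\{w_n\}\subset\C$ is bounded; granting this, a further subsequence satisfies $w_n\to w_\infty\in\C$, and combined with $\vtil_n\to\util_\infty$ we obtain $[(\vtil_n,w_n)]/\R\to[(\util_\infty,w_\infty)]/\R$ in $\Mfast_1(P_0^m,J)/\R$, yielding the desired compactness (continuity of $\ev$ then ensures the limit lies in $\ev^{-1}(K)$). To establish boundedness, fix a Martinet tube $\Psi:\mathcal{N}\to\R/\Z\times\D$ for $P_0^m$ small enough that $\overline{\mathcal{N}}\cap K=\emptyset$, which is possible because $K$ is compact and disjoint from $x(\R)$. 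Theorem~\ref{thm_asymptotics} applied to $\util_\infty$ gives $R_0>0$ with $u_\infty(\C\setminus B_{R_0})\subset\mathcal{N}$. I claim there exist $R\ge R_0$ and $N\in\N$ such that $v_n(\C\setminus B_R)\subset\mathcal{N}$ for all $n\ge N$. Granting the claim, $|w_n|>R$ would force $v_n(w_n)\in\mathcal{N}$, contradicting $v_n(w_n)\in K$; hence $\{w_n\}$ is eventually contained in $\overline{B_R}$.

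The main obstacle is precisely this uniform end-control, which is not formal from $C^\infty_{\rm loc}$-convergence alone. I plan to extract it as follows. The asymptotic eigenvalue at the positive puncture of any fast plane asymptotic to $P_0^m$ sits in the fixed strictly negative spectral gap of the asymptotic operator of $(P_0^m,J)$ between the eigenvalues of winding $1$ and $2$, so the Fredholm set-up on weighted Sobolev (or H\"older) spaces with exponential weight in that gap applies uniformly to the entire compact moduli space. This upgrades $C^\infty_{\rm loc}$-convergence $\vtil_n\to\util_\infty$ to uniform exponential decay to the orbit cylinder on the ends, producing the required $R$. Alternatively, if the claim failed one could find $|y_n|\to\infty$ with $v_n(y_n)\notin\mathcal{N}$, and running SFT compactness on reparametrizations centered at $y_n$ would extract an additional non-trivial holomorphic curve in the limiting building, contradicting the fact that $\vtil_n$ SFT-converges to the single fast plane $\util_\infty\in\Mfast(P_0^m,J)$.
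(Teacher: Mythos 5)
Your argument is correct and follows the paper's proof essentially verbatim: extract a $C^\infty_{\rm loc}$-convergent subsequence of reparametrised representatives using compactness of $\Mfast(P_0^m,J)$, fix a neighbourhood of $x(\R)$ disjoint from $K$, and show the marked points cannot escape to infinity because the ends of the planes are uniformly trapped in that neighbourhood. The paper makes the uniform end-control step (which you correctly identify as the crux) precise by invoking the results on finite-energy cylinders of small $d\lambda$-area from Hofer--Wysocki--Zehnder, which is exactly the mechanism behind your second alternative: all the planes have the same total $d\lambda$-area $mT_0$, already captured by the limit plane, so no area can concentrate near infinity.
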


\begin{proof}
%We already know from Proposition~\ref{prop_compactness} that $\Mfast(P,J)/\R$ is compact. 
Suppose that $[\util_n,z_n]$ represents a sequence in $\ev^{-1}(K)$. Up to reparametrisation, translation in the $\R$-component, and selection of a subsequence, we may assume that $\util_n$ converges in $C^\infty_{\rm loc}$ to some plane $\util$ representing an element of $\Mfast(P_0^m,J)$. Let $\mathcal{N}$ be a neighbourhood of $x(\R)$ such that $K \cap \mathcal{N} = \emptyset$. One can then invoke results on cylinders of small contact area from~\cite{small_area} to conclude that there exists $R$ and $n_0$ such that if $n\geq n_0$ and $|z|\geq R$ then $\util_n(z) \in \R\times \mathcal{N}$. This implies that $\sup_n|z_n|\leq R$. Hence one can assume, up to selection of a subsequence, that $z_n \to z$ for some $z$. It follows that $[\util_n,z_n]/\R \to [\util,z]/\R$. 
\end{proof}

\begin{lemma}
The image of the map $\ev$ contains $S^3\setminus x(\R)$.
\end{lemma}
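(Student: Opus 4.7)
The plan is to show that $\ev$, when restricted to the preimage of $S^3\setminus x(\R)$, is a proper local diffeomorphism, so that its image there is simultaneously open, closed and non-empty in the connected knot complement $S^3\setminus x(\R)$, and hence all of it.

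The first step is a dimension count. Since $\Mfast(P_0^m,J)$ is a smooth $1$-manifold and $\Mfast_1(P_0^m,J)/\R$ is obtained from it by adjoining a marked point $z\in\C$ (contributing two more real dimensions, with the free $\R$-action in the symplectisation direction already absorbed into the equivalence), one gets $\dim\Mfast_1(P_0^m,J)/\R = 3 = \dim S^3$. Combined with Lemma~\ref{lemma_ev_submersion}, this upgrades $\ev$ from a submersion to a local diffeomorphism.

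Next, Lemma~\ref{lemma_ev_relatively_proper} says that $\ev\colon\ev^{-1}(S^3\setminus x(\R))\to S^3\setminus x(\R)$ is proper, and together with the local diffeomorphism property this makes it a covering map onto its image. Hence $\mathcal{I}:=\ev(\Mfast_1(P_0^m,J)/\R)\cap(S^3\setminus x(\R))$ is open in $S^3\setminus x(\R)$ (local diffeomorphism) and closed in $S^3\setminus x(\R)$ (proper maps to Hausdorff spaces are closed). For non-emptiness, Lemma~\ref{lemma_fast_symplectisations} says that any $\util=(a,u)\in\Mfast(P_0^m,J)$ has $u\colon\C\to S^3$ an immersion, so the $2$-dimensional immersed surface $u(\C)$ cannot be contained in the $1$-dimensional set $x(\R)$; picking $z_0\in\C$ with $u(z_0)\notin x(\R)$ yields an element of $\mathcal{I}$.

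Finally, $S^3\setminus x(\R)$ is the complement of a knot in $S^3$, hence connected, so any non-empty open-and-closed subset is the whole space, giving $\mathcal{I}=S^3\setminus x(\R)$. No step should present a serious obstacle, since the ingredients are all in place: the submersion property, the proper behaviour over $S^3\setminus x(\R)$ (which crucially relies on the compactness of $\Mfast(P_0^m,J)$ provided by Corollary~\ref{c:fast-planes-S1-family}), and the dimension match together reduce the lemma to a standard open-closed-connected argument.
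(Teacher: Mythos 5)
Your argument is correct and follows essentially the same route as the paper: openness of the image from the submersion property (Lemma~\ref{lemma_ev_submersion}), closedness in $S^3\setminus x(\R)$ from relative properness (Lemma~\ref{lemma_ev_relatively_proper}), and connectedness of the knot complement. The extra observations you include (the dimension count upgrading $\ev$ to a local diffeomorphism, and the explicit non-emptiness check via the immersion property) are sound and fill in details the paper leaves implicit, but are not needed beyond what the two cited lemmas already give.
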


\begin{proof}
By Lemma~\ref{lemma_ev_submersion} the image is open in $S^3$, hence its intersection with $S^3\setminus x(\R)$ is an open subset of $S^3\setminus x(\R)$. By Lemma~\ref{lemma_ev_relatively_proper} the intersection of the image of $\ev$ with $S^3\setminus x(\R)$ is a closed subset of $S^3\setminus x(\R)$. The conclusion follows from connectedness of $S^3\setminus x(\R)$.
\end{proof}

Consider $[\util=(a,u)] \in \Mfast(P_0^m,J)$ and the function
\begin{equation}
\tau : S^3\setminus x(\R) \to [0,+\infty] 
\end{equation}
defined by
\begin{equation}
\tau(p) \ = \ \inf \{ t>0 \mid \phi^t(p) \in u(\C) \}
\end{equation}
with the convention that the infimum of the empty set is $+\infty$.

\begin{lemma}\label{lemma_bounds_on_return_times}
$\tau$ takes values on $(0,+\infty)$, and $\sup \tau < +\infty$.
\end{lemma}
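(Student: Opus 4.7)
My plan is to prove the three assertions $\tau>0$, $\tau<+\infty$, and $\sup\tau<+\infty$ in turn.

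For positivity I would use Lemma~\ref{lemma_fast_symplectisations}: since $u$ is an immersion transverse to $X_\lambda$ and $u(z)\to x(\R)$ as $|z|\to\infty$, the image $u(\C)$ is closed in $S^3\setminus x(\R)$. If $p\in S^3\setminus x(\R)$ lies off $u(\C)$, continuity of the flow gives $\tau(p)>0$ immediately. If $p\in u(\C)\setminus x(\R)$, then $u^{-1}(p)$ is a finite set (bounded in $\C$ by the asymptotic convergence to $x(\R)$, discrete by the immersion property), and transversality of $du$ with $X_\lambda$ at each preimage shows $\phi^t(p)\notin u(\C)$ for all small $t>0$.

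For the bound on $\tau$ I would split $S^3\setminus x(\R)$ into a Martinet tube $\mathcal N$ of $x(\R)$ and its complement $K$, which is compact in $S^3\setminus x(\R)$. Inside $\mathcal N$, Theorem~\ref{thm_asymptotics} gives the explicit asymptotic form $\Psi(u(s,t))=(mt+d+o(1),e^{\nu s}(v(t)+o(1)))$ with $\nu<0$ and eigensection $v:\R/\Z\to\C^*$. The winding of $v$ is pinned down by the fastness condition $\wind_\infty(\util)=1$, while the linearized Reeb flow rotates the normal disk at a rate governed by the rotation number $\alpha$ of the primitive orbit $P_0$. Dynamical convexity forces $\CZ(P_0)\geq 3$ and hence $\alpha>1$. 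Comparing these two rotation rates, one obtains a uniform return-time bound $T_1$ on a slightly shrunken sub-tube.

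Outside the tube I would first show that $\tau$ is upper semicontinuous on $S^3\setminus x(\R)$: where $\tau(p)<\infty$, transversality of $X_\lambda$ with $u(\C)$ at $\phi^{\tau(p)}(p)$ ensures that points $q$ close to $p$ satisfy $\tau(q)\leq\tau(p)+o(1)$. Pointwise finiteness $\tau<+\infty$ on $K$ would then follow by contradiction: the forward orbit of a point with $\tau=+\infty$ must either enter $\mathcal N$ (giving a return in bounded time by the tube estimate, contradicting $\tau=+\infty$) or stay in the compact set $K\setminus u(\C)$. The second alternative would be ruled out by combining compactness of $\Mfast(P_0^m,J)$ from Corollary~\ref{c:fast-planes-S1-family}, the fact that the image of $\ev$ contains $S^3\setminus x(\R)$, and properness of $\ev$ over compact subsets of $S^3\setminus x(\R)$ (Lemma~\ref{lemma_ev_relatively_proper}) to force such an invariant set to meet some fast plane, which together with the local structure of $\ev$ near $\util$ forces $u(\C)$ itself to be met in bounded time. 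Upper semicontinuity together with pointwise finiteness on the compact set $K$ then yields the uniform bound.

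The most delicate step is the Martinet-tube estimate, where the fast-plane condition and dynamical convexity must be combined quantitatively against the $m$-fold spiral structure of $u$ near $x(\R)$; a secondary difficulty is the complement argument, where care is needed to convert the presence of a foreign fast plane through a limit point into a return onto the \emph{specific} fixed $u(\C)$.
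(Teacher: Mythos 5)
Your proposal follows essentially the same route as the paper's proof: positivity and finiteness of preimages from transversality and the asymptotic formula; a uniform return-time bound near $x(\R)$ by playing the winding-$1$ approach encoded in $\wind_\infty(\util)=1$ against the rotation $>2\pi$ per period of the linearized flow forced by $\CZ^0_\tau(P_0^m)\geq 3$; finiteness away from the binding via compactness of $\Mfast(P_0^m,J)$ and the evaluation map; and finally local upper bounds on $\tau$ (your upper semicontinuity is the paper's ``local germs of hitting times'') combined with compactness. The one step you flag as delicate --- upgrading ``the orbit meets some fast plane'' to ``the orbit returns to the specific plane $u(\C)$'' --- is left at the same level of detail in the paper itself, so your sketch is faithful to the published argument.
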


\begin{proof}
From the transversality of $u$ to the Reeb flow, and the asymptotic formula from Theorem~\ref{thm_asymptotics}, we conclude that given any $[\vtil=(b,v)] \in \Mfast(P_0^m,J)$ and $p\in S^3$, the set $v^{-1}(p)\subset \C$ is finite, and also that $\tau$ takes values on $(0,+\infty]$.  

Suppose that $p\not\in x(\R)$ and $\omega(p) \cap x(\R) \neq \emptyset$. By invariance of $x(\R)$ under the Reeb flow, the trajectory $\phi^t(p)$ will spend arbitrarily long times in the future arbitrarily and uniformly close to $x(\R)$. Hence, the way in which it rotates around $x(\R)$ is governed by the linearised Reeb flow along $x$. Every plane $\vtil=(b,v)$ representing an element in $\Mfast(P_0^m,J)$ is asymptotic to $P_0^m$ according to an eigenvector of a negative eigenvalue of the asymptotic operator with winding $+1$ in a global frame; this information is encoded in $\wind_\infty(\vtil)=1$. Hence, in transverse polar coordinates aligned with the global frame the plane rotates $2\pi$. After one period $T=mT_0$ the linearised flow rotates every transverse vector by an angle larger than $2\pi+\Delta$ for some uniform $\Delta>0$. This information is encoded in $\CZ^0_\tau(P_0^m)\geq 3$. Hence after flow time of about $\lfloor\frac{2\pi}{\Delta}+1\rfloor T$ any point nearby $P_0$ already returned once back to the plane. It follows that the return time is bounded from above for points near $P_0$.
%By the asymptotic formula, trajectories near $x$ will hit $v(\C)$ at least once after flow time of about the period $mT_0$ of $P_0^m$. This argument actually shows that for $p$ close enough to $x(\R)$ we get an estimate $\tau(p) \leq 2mT_0$.

If $\omega(p) \cap x(\R) = \emptyset$ then it follows from compactness of $\Mfast(P_0^m,J)$ and transversality of the planes to the Reeb flow that for every $[\vtil] \in \Mfast(P_0^m,J)$ the trajectory $\phi^t(p)$ will hit $v(\C)$ in finite time.

So far we have proved that $\tau$ takes values on $(0,+\infty)$, and that $\tau$ is bounded near $x(\R)$. To conclude we note that if $$ \{w_1,\dots,w_N\} = u^{-1}(\phi^{\tau(p)}(p)) $$ then there are $N$ local smooth germs of hitting times $\tau_1,\dots,\tau_N$ near $p$. Then $\tau$ can be locally bounded in terms of these germs.
\end{proof}

Proposition~\ref{prop_SFS_input} is a consequence of Lemma~\ref{lemma_bounds_on_return_times}.

\begin{remark}
We observe that the finite energy planes produced by Corollary \ref{c:fast-planes-S1-family}
can themselves be thought of as sorts of generalized surfaces of section where we allow for the possibility that
the surface is an immersion rather than embedding.  Indeed our proof shows that
the projection to $S^{3}$ of 
every such plane is 
an immersion, transverse to the Reeb flow, and that the flow line through any given point in
$S^{3}\setminus P_{0}$ will hit the surface in forward and backward time.
In the case that the plane is not an embedding,
it follows from results in \cite{props2, sief_int} that
it must intersect its asymptotic limit, and thus in this case the plane will intersect
the flow line through any given point in $S^{3}$ including points in $P_{0}$.

We observe further that, since our proof shows that the evaluation map
\[
\ev:\Mfast_1(P_0^m,J)/\R \to S^3
\]
is an immersion between manifolds of the same dimension, it is also a local diffeomorphism, so we can 
use $\ev^{-1}$ to lift the flow to the moduli space $\Mfast_1(P_0^m,J)/\R$,
each component of which is diffeomorphic to $\C\times S^{1}$.
Moreover, since each plane in $\Mfast_1(P_0^m,J)/\R$
is transverse to the flow, the resulting flow on $\Mfast_1(P_0^m,J)/\R$ will be transverse to the
disk-like fibers
of the forgetful map $\Mfast_1(P_0^m,J)/\R\mapsto\Mfast(P_0^m,J)/\R$.
So although the surface of section provided by our theorem will in general have genus,
the fast finite energy planes
that we construct in the proof can themselves be used to visualize the dynamics as a return map 
on a disk.
\end{remark}

\end{document}